\definecolor{gray}{gray}{0.4}
\title[Order four]{Symmetries of order four on K3 surfaces}
\author{Michela Artebani}
\address{Departamento de Matem\'atica, Universidad de Concepci\'on, Casilla 160-C, Concepci\'on, Chile}
\email{martebani@udec.cl}
\author{Alessandra Sarti}
\address{Laboratoire de Math\'ematiques et Applications, UMR CNRS 7348,
			Universit\'e de Poitiers, T\'el\'eport 2, Boulevard Marie et Pierre Curie,
			F-86962 FUTUROSCOPE CHASSENEUIL}
\email{sarti@math.univ-poitiers.fr}
\urladdr{http://www-math.sp2mi.univ-poitiers.fr/~sarti/}
\date{\today}
\newtheorem{lemma}{Lemma}
\newtheorem{pro}{Proposition}
\newtheorem{cor}{Corollary}
\newtheorem{theorem}{Theorem}[section]
\theoremstyle{definition}
\newtheorem{example}[theorem]{Example}
\newtheorem{remark}[theorem]{Remark}
\newcommand{\bprf}{{\it Proof}.~}
\DeclareMathOperator{\Pic}{Pic}
\DeclareMathOperator{\rank}{rk}
\DeclareMathOperator{\id}{id}
\DeclareMathOperator{\Fix}{Fix}
\DeclareMathOperator{\tr}{tr}
\DeclareMathOperator{\rk}{rk}
\newcommand{\IC}{\mathbb{C}}
\newcommand{\IQ}{\mathbb{Q}}
\newcommand{\IZ}{\mathbb{Z}}
\newcommand{\IR}{\mathbb{R}}
\newcommand{\IP}{\mathbb{P}}
\newcommand{\calO}{\mathcal{O}}
\newcommand{\map}{\rightarrow}
\subjclass[2010]{Primary 14J28; Secondary 14J50, 14J10}
\keywords{non-symplectic automorphism, K3 surface}
\thanks{The authors have been partially 
supported by Proyecto FONDECYT Regular 2009, N. 1090069
and by Proyecto FONDECYT Regular 2011, N. 1110249.}
\begin{document}

\begin{abstract}
In this paper we study automorphisms of order four on K3 surfaces. 
We give a classification of the non-symplectic ones when either
the square of the automorphism is symplectic, or
its fixed locus contains a curve of positive genus, or
its fixed locus contains at least a curve and all the curves fixed by its square are rational.   
We provide partial results in the other cases.
\end{abstract}

\maketitle

\section*{Introduction}
Let $X$ be a K3 surface over $\mathbb C$ with an order four automorphism.
Such automorphism acts on the one-dimensional vector space $H^{2,0}(X)$ of holomorphic two-forms of $X$ either as the identity,
minus the identity or as the multiplication by $\pm i$.
Accordingly, the automorphism is called symplectic, with symplectic square or purely non-symplectic. 
Symplectic automorphisms of finite order have been investigated by several authors, their fixed locus only contains isolated points and their action on the K3 lattice $H^2(X,\IZ)$ is known to be independent from the surface (cf.\cite{Nikulin1, Alice1, Alice2}).
Non-symplectic automorphisms have been studied in \cite{Nikulin1, MO} (see also \cite{zhang} for a survey on the topic) and the fixed locus has been identified if the order is prime in \cite{AS3,takiauto, ast}. 
In \cite{Taki} Taki classified order four non-symplectic automorphisms acting as the identity on the Picard lattice of the surface. Moreover, Sch\"u{tt} \cite{Schuett} studied the special case when the transcendental lattice of the surface has rank four.

This paper mainly deals with purely non-symplectic automorphisms $\sigma$ of order four under the assumption that their square is the identity on the Picard lattice. 
By the Torelli type theorem, this holds for the generic element of the family of K3 surfaces carrying 
an order four non-symplectic automorphism with a given action on its second cohomology group.
The fixed locus  $\Fix(\sigma)$  of such an automorphism $\sigma$ is the disjoint union of smooth curves and points. We give a complete classification of  $\Fix(\sigma)$ when either it contains a curve of positive genus or it contains a curve and all the curves fixed by $\sigma^2$ are rational. 
More precisely, we denote by $r$ and $l$ the ranks of the eigenspaces of  $\sigma^*$ in  $H^2(X,\IZ)$ with eigenvalues $1$ and $-1$, by $n$ and $k$ the number of isolated points and of smooth rational curves in $\Fix(\sigma)$ and by $2a$ the number of smooth curves fixed by $\sigma^2$ and interchanged by $\sigma$.
 We prove the following result.

\begin{theorem}\label{teoremone}
Let $\sigma$ be a purely non-symplectic order four automorphism on a K3 surface $X$ such that $(\sigma^*)^2$ acts identically on $\Pic(X)$. Then:
$$n=2\alpha+4,\quad r-l=4\alpha+2,$$
where $\alpha=\sum_{C\subset \Fix(\sigma)} 1-g(C)$.
Moreover the following hold.
\begin{itemize}
\item If $\Fix(\sigma)$ contains a curve of genus  $g=1$, then all the other fixed curves are rational and we have the following possibilities for $(r,k,a)$: 
$$(6,0,0),\ (7,0,0),\ (10,1,0),\ (8,0,1),\ (9,0,2),\ (10,0,3).$$
\item If $\Fix(\sigma)$ contains a curve of genus $g>1$, then all the other fixed curves are rational and we have the following possibilities for $(r,k,a,g)$: 
$$(1,0,0,3),\ (4,0,0,2),\ (2,0,1,3),\ (5,0,1,2),\ (6,0,2,2),$$ 
where all the cases occur. 
\item If $\Fix(\sigma)$ contains a curve of genus $g=0$ and all the curves fixed by $\sigma^2$ are rational, then we have the following possibilities for $(r,k,a)$: 
$$(10,1,0),\ (13,2,0),\ (11,1,1),\ (16,3,0),$$
$$(14,2,1),\ (12,1,2),\ (19,4,0),\ (13,1,3),$$ 
where all the cases occur. 
\end{itemize}
\end{theorem}

We prove Theorem \ref{teoremone} in several steps, mainly in  theorems \ref{g1}, \ref{propocurve} and \ref{rational}. We give examples showing the existence in examples \ref{wei}, \ref{quartic}, \ref{hyp}, \ref{ex0}.  In the cases not considered in the theorem, i.e. when either $\Fix(\sigma)$ only contains isolated points or $\sigma^2$ fixes a curve of positive genus which is not fixed by $\sigma$, 
we provide partial results and we give several examples. More precisely, we show the following theorem, where we use the same notation as before.
\begin{theorem}\label{teoremone2}
Let $\sigma$ be a purely non-symplectic order four automorphism on a K3 surface $X$ such that $(\sigma^*)^2$ acts identically on $\Pic(X)$. Then the following hold. 
\begin{itemize}
\item If $\sigma^*$ acts as the identity on $\Pic(X)$, then the fixed locus of $\sigma$ only contains smooth rational curves and points,
$\sigma^2$ fixes a curve of genus $g\geq 1$ and we have the following possibilities for 
$(r,k,g)$: 
$$(2,0,10),\ (2,0,9),\ (6,1,7),\ (6,1,6),\ (10,2,g), \ 3\leq g\leq 6,$$
$$(14,3,3),\ (14,3,2),\ (18,4,2),\ (18,4,1),$$ 
where all the cases occur.
\item If $\Fix(\sigma)$ is zero-dimensional, then it contains exactly $4$ points. If $l>0$, 
 then the possible invariants of $\Fix(\sigma^2)$ are given in Table \ref{alpha=0} 
 and we have $3\leq r\leq 11$, $a\leq 4$, $g\leq 8$, where $g$ is the highest
genus of a curve fixed by $\sigma^2$.

\item If we are not in one of the previous cases or in one of the cases of Theorem \ref{teoremone}, i.e. $\Fix(\sigma)$ contains only $k>0$ rational curves and isolated fixed points, the highest genus $g$ of a curve fixed by $\sigma^2$ is positive  and $l>0$, then 
\begin{itemize}
\item if $g>1$ there are $63$ possible cases with $k\leq 3$, $g\leq 7$ and $a\leq 4$;
\item if $g=1$ we have the following possibilities for
$(r,k,a)$: 
$$(9,1,0),\ (12,2,0),\ (15,3,0),\ (10,1,1),\ (10,1,0).$$ 
\end{itemize}

\end{itemize}
\end{theorem}
We prove this result in theorems \ref{teotaki}, \ref{teopunti}, \ref{curvefissate} and \ref{othergenus1} and give examples in \ref{hirz}, \ref{quadric}, \ref{hirz1}, \ref{jaco}, \ref{wei2}, \ref{singu}, \ref{ultimo}.  

Finally, in  Proposition \ref{symp} we consider the case when the automorphism has symplectic square.
 
The study of non-symplectic automorphisms of order four  is
interesting in relation with the Borcea-Voisin construction of Calabi-Yau varieties 
and the investigation of Mirror-Symmetry (cf. \cite{Borcea, Voisin}). In fact Borcea and Voisin consider the product between a K3 surface with a non-symplectic automorphism of order 2,3,4 or 6, and an elliptic curve with an automorphism of the same order. A resolution of a quotient variety
is then a Calabi-Yau threefold. In \cite{Alice0} Garbagnati used non-symplectic automorphisms of order four to give examples of Calabi-Yau threefolds by means of this construction.

We now give a short description of the paper's sections. 
In section 1 we give a general description of the fixed locus of $\sigma$. 
By means of Lefschetz's formulas we provide relations between the invariants $n,k,g$ and the ranks of the eigenspaces of $\sigma^*$ on the vector space $H^2(X,\IC)$.

In section 2 we study elliptic fibrations $\pi:X\map \IP^1$ such that $\sigma$ preserves each fiber of $\pi$. In Corollary \ref{cor}  the configuration of the singular fibers, which are of Kodaira type ${\rm III, I_0^*}$ or ${\rm III^*}$,  is related to the structure of the fixed locus of $\sigma$.

In section 3 we assume that $\sigma$ fixes pointwisely an elliptic curve $E$. In Theorem \ref{g1} we describe the singular fibers of the elliptic fibration with fiber $E$ and the corresponding structure of the fixed locus of $\sigma$.

In section 4 and 5 we classify the case when the fixed locus of $\sigma$ either contains a curve of positive genus, or it contains a rational curve and $\sigma^2$ fixes only rational curves.
 
In section 6 we assume that $\sigma^*$ is the identity on the Picard lattice and
 we give an independent proof of \cite[Proposition 4.3]{Taki}.
 
 In section 7 we consider the case when $\sigma$ only fixes isolated points and we provide families of examples.
 
 In section 8, we study the case when $\sigma$ only fixes isolated points and rational curves, and $\sigma^2$ fixes a curve of positive genus.  \\

{\em Acknowledgements:} We warmly thank Bert van Geemen for useful discussions 
and Alice Garbagnati for several remarks and corrections.  
We also thank the anonimous referee for many comments and 
for pointing out a mistake in the first version. 

\section{The fixed locus}

Let $X$ be a K3 surface with a {\it non-symplectic} automorphism $\sigma$ of order four, 
i.e. such that the action of $\sigma^*$ on the vector space $H^{2,0}(X)=\IC \omega_X$ of holomorphic two-forms is not trivial. We call the automorphism {\it purely non-symplectic} if $\sigma^*\omega_X=\pm i \omega_X$, i.e. if its square is a non-symplectic involution. 

 We will denote by $r,l,m$ the rank of the eigenspace of $\sigma^*$ in $H^2(X,\IC)$ relative to the eigenvalues $1,-1$ and $i$ respectively. Moreover, let
$$
S(\sigma)=\{x\in H^2(X,\IZ)\,|\, \sigma^*(x)=x\},
$$
$$
S(\sigma^2)=\{x\in H^2(X,\IZ)\,|\, ({\sigma^2})^*(x)=x\},\quad T(\sigma^2)=S(\sigma^2)^{\perp}\cap H^2(X,\IZ).
$$ 
Observe that $r=\rank S(\sigma)$,  $r+l=\rank S(\sigma^2)$ and $2m=\rank T(\sigma^2)$. 

We start recalling the following result about non-symplectic involutions (see \cite[Theorem 4.2.2]{nikulinfactor}, \cite[Theorem 6.1]{K1} or \cite[Theorem 4.1]{ast}).
\begin{theorem}\label{inv}
Let $\tau$ be a non-symplectic involution on a K3 surface $X$. The fixed locus of $\tau$ is 
either empty, the disjoint union of two elliptic curves or the disjoint union of a 
smooth curve of genus $\gamma\geq 0$ and 
$j$ smooth rational curves. 

Moreover, its fixed lattice $S(\tau)\subset \Pic(X)$ is a $2$-elementary lattice with determinant $2^d$ such 
that:
\begin{itemize}
\item $S(\tau)\cong U(2)\oplus E_8$ if the fixed locus of $\tau$ is empty;
\item  $S(\tau)\cong U\oplus E_8(2)$ if $\tau$ fixes two elliptic curves;
\item $2\gamma=22-\rk S(\tau)-d$ and $2j=\rank S(\tau)-d$ otherwise.
\end{itemize}

\end{theorem}
\begin{pro}\label{rel4}
Let $\sigma$ be a purely non-symplectic automorphism of order four on a K3 surface $X$. 
Then
$\Fix(\sigma)$ is the disjoint union of smooth curves and $n$ isolated points.
 Moreover, the following relations hold:
$$
n=2\alpha+4,\quad 4\alpha=r-l-2=2(10-l-m),
$$
where $\alpha=\sum_{C\subset \Fix(\sigma)} (1-g(C))$.
\end{pro}
\begin{proof} 
Since $\sigma$ is purely non-symplectic, then $\sigma^2$ is a non-symplectic involution.
Thus the fixed locus of $\sigma$ is the disjoint union of smooth curves and isolated points by Theorem \ref{inv}.
 The action of $\sigma$ at  a point in $\Fix(\sigma)$ can be locally diagonalized as follows (see \cite[\S 5]{Nikulin1}):
$$
A_{4,0}= \left(\begin{array}{cc}
i &0\\
0& 1
\end{array}
\right),\ \ 
A_{4,1}= \left(\begin{array}{cc}
-i &0\\
0& -1
\end{array}
\right).
$$
In the first case the point  belongs to a smooth fixed curve, while in the second case it is an isolated fixed point. 
 We will apply holomorphic and topological Lefschetz's formulas to obtain the last two relations in the statement.
The holomorphic Lefschetz number of $\sigma$ is
$$
L(\sigma)=\sum_{j=0}^{2}(-1)^j \tr(\sigma^*|H^j(X,\calO_X))=1-i,
$$ 
since $\sigma^*$ acts as multiplication by $i$ on $H^{2,0}(X)$.
By \cite[p. 567]{atiyahsinger} one obtains:
$$
L(\sigma)=\frac{n}{\det(I-\sigma^*|T_x)}+\frac{i-1}{2}\sum_{C\subset \Fix(\sigma)} (1-g(C))
=\frac{n}{\det(I-A_{4,1})}+\alpha \frac{i-1}{2},
$$
where $n$ is the number of isolated fixed points, $x$ is an isolated fixed point and $T_x$ denotes the tangent space at $x$.
Comparing the two formulas for $L(\sigma)$ 
we obtain the relation $n=2\alpha+4$. In particular this implies that the fixed locus of $\sigma$ (and thus that of $\sigma^2$) is not empty.
We now consider the topological Lefschetz fixed point formula
\begin{eqnarray*}
\chi(\Fix(\sigma))=\sum_{j=0}^{4}(-1)^j\tr(\sigma^*|H^j(X,\IR))
=2+\tr (\sigma^*|H^2(X,\IR)).
\end{eqnarray*}
Since $\tr (\sigma^*|H^2(X,\IR))=r-l$, then:
$$
\chi(\Fix(\sigma))=n+2\alpha=2+r-l
.$$
Using the relation $n=2\alpha+4$ we obtain the two expressions for $\alpha$ in the statement.  
\end{proof}
We now provide a similar result in case $\sigma^2$ is symplectic.
\begin{pro}\label{symp}
Let $\sigma$ be a non-symplectic automorphism of order four on a K3 surface $X$ such that $\sigma^2$ is symplectic. 
Then $\Fix(\sigma)$ contains $n\leq 8$ isolated fixed points and the possible values of the triple $(r,l,n)$ are 
\[
(6,8,0),\ (7,7,2),\ (8,6,4),\ (9,5,6),\ (10,4,8).
\] 
\end{pro}
\begin{proof}
The fixed locus of the symplectic involution $\sigma^2$ is given by $8$ points \cite{Nikulin1}.
Thus $\sigma$ fixes at most $n\leq 8$ points.
Moreover, the topological Lefschetz formula gives that $n=2+r-l$.
Since the invariant lattice of a symplectic involution has rank $14$ by \cite{Nikulin1}, then $l+r=14$. 
This gives the statement.  
\end{proof}

\begin{example}
Consider the following family of quartics in $\IP^3$:
$$a_1x_0^4+x_0^2(a_2x_1^2+a_3x_2x_3)+x_0x_1(a_4x_2^2+a_5x_3^2)+$$
$$x_1^2(a_6x_1^2+a_7x_2x_3)+x_2^2(a_{8}x_2^2+a_{9}x_3^2)+a_{10}x_3^4=0.
$$
The generic element $X_{a}$ of the family is a smooth quartic surface, hence a K3 surface, and carries the order four automorphism:
$$\sigma(x_0,x_1,x_2,x_3)=(x_0,-x_1,ix_2,-ix_3),$$
which has no fixed points and whose square fixes the eight intersection points between $X_a$ and the lines $x_0=x_1=0$, $x_2=x_3=0$.
Since the space of matrices in $\rm GL_4(\IC)$ commuting with $\sigma$ has dimension $4$, then the family has $10-4=6$ moduli.

On the other hand, the family of plane quartics of the form $f(x_0,x_1)+g(x_2,x_3)=0$, where $f,g$ are homogeneous of degree $4$, carries the order four automorphism $\tau(x_0,x_1,x_2,x_3)=(x_0,x_1,ix_2,ix_3)$, whose square is again symplectic and whose fixed locus contains exactly $8$ points.
The matrices commuting with $\tau$ are a space of dimension $8$ so we get $10-8=2$ moduli.

Finally consider the elliptic fibration
$$
y^2=x^3+a(t)x+b(t)
$$
with $a(t)=a_8t^8+a_4t^4+a_0, b(t)=b_{10} t^{10}+b_6 t^6+b_2 t^2$, $a_i,b_j\in\mathbb{C}$ and automorphism
$(x,y,t)\mapsto (-x,iy,it)$. The automorphism has symplectic square and 
leaves invariant the fibers over $t=0$ and $t=\infty$,
each of them contains two isolated fixed points which gives $n=4$.
Here the automorphisms of $\mathbb{P}^1$ commuting with the automorphism $t\mapsto it$ are a space of dimension 1, moreover we can apply to the equation the transformation
$(x,y)\mapsto(\lambda^2 x, \lambda^3 y)$ for a suitable $\lambda\in \mathbb{C}$ (and divide by $\lambda^6$). So we find $6-2=4$ moduli.
\end{example}

\begin{remark} The moduli space of K3 surfaces carrying a purely non-symplectic automorphism of order four with a  given action on the K3 lattice is known to be a complex ball quotient of dimension $m-1$, see \cite[\S 11]{DK}. The complex ball is given by:
$$B=\{[w]\in \IP(V): (w,\bar w)>0\},$$
where $V$ is the $i$-eigenspace of $\sigma^*$ in $T(\sigma^2)\otimes \IC$.
This implies that the Picard lattice of the K3 surface corresponding to the generic point of such space 
equals $S(\sigma^2)$ (see \cite[Theorem 11.2]{DK}). 
On the other hand, if the automorphism has symplectic square, then the period point of $X$ 
belongs to the $-1$-eigenspace in $H^2(X,\IZ)$, so that $\Pic(X)$ contains $S(\sigma)\oplus T(\sigma^2)$, $\rk\Pic(X)\geq 14$ and, given the action on the K3 lattice, the dimension of the moduli space is equal to $l-2$.  \end{remark}

The following results will be useful later.
 \begin{lemma}\label{lat}
Let $\sigma$ be a non-symplectic automorphism on a K3 surface $X$. Then the invariant lattice $S(\sigma)$ is a hyperbolic  sublattice of $\Pic(X)$.
\end{lemma}
\begin{proof} 
If $x\in S(\sigma)$, then $(x,\omega_X)=(\sigma^*(x),\sigma^*(\omega_X))=(x,\epsilon\omega_X)$, with $\epsilon\not=1$ since $\sigma$ is non-symplectic. Thus $x\in \Pic(X)=\omega_X^{\perp}\cap H^2(X,\IZ)$. 
By \cite[Theorem 3.1]{Nikulin1} the surface $X$ is algebraic, so that $\Pic(X)$ is hyperbolic by Hodge index theorem.
This implies that $S(\sigma)$ is a hyperbolic lattice since, given an effective class $x\in \Pic(X)$  with $x^2>0$, then $\sum_{i=1}^n(\sigma^*)^i(x)$ ($n$ is the order of $\sigma$) is a $\sigma^*$-invariant class with positive self-intersection.
\end{proof}

\begin{lemma}\label{pic}
Let $\tau$ be a non-symplectic involution, $\pi:X\map Y:=X/\langle \tau\rangle$ 
be the quotient map and assume that the fixed locus of $\tau$ is the union of a 
curve $C$ and $j$ smooth rational curves $E_1,\dots,E_{j}$.
Then the invariant lattice $S(\tau)$ is generated by 
$\pi^*\Pic(Y)$ and by the classes of $E_1,\ldots, E_{j}$.
\end{lemma}
\begin{proof}
First observe that $\pi^*\Pic(Y)\otimes \IQ=S(\tau)\otimes \IQ$. In fact, if $y\in \Pic(Y)$
then $\pi^*(y)=z+\tau^*(z)$ is invariant by $\tau^*$. 
On the other hand, if $x\in S(\tau)$
then $x=(\pi^*\pi_*(x))/2\in\pi^*\Pic(Y)\otimes \IQ$. 
In particular  $\rho:=\rank S(\tau)=\rank \Pic(Y)$.
Moreover we have $\pi^*\Pic(Y)\cong \Pic(Y)(2)$ as a lattice.
 Since the quotient surface $Y$ is smooth and rational, then
$\Pic(Y)=H^2(Y,\IZ)$ is unimodular, so $[S(\tau):\pi^*\Pic(Y)]^2=2^\rho/2^d$.
Since  $2j=\rho-d$ by Theorem \ref{inv} we get $[S(\tau):\pi^*\Pic(Y)]=2^j$. 

Observe that the curves $\pi(E_i)$ are $(-4)$-curves in $Y$ and 
the sum of their classes is not equal to $-2K_Y$ (in fact 
$[\pi(C)+\sum_i \pi(E_i)]=-2K_Y$).
It follows by \cite[Lemma 2.2]{Sanchez} that any subset of  $\{E_1,\dots,E_j\}$
is not an even set.
This implies that the classes of the $E_i$'s are not contained in $\pi^*\Pic(Y)$ 
and give independent elements in $S(\tau)/\pi^*\Pic(Y)\cong \IZ/2\IZ^{\oplus j}$. 
Thus the index of the lattice generated by 
$\pi^*\Pic(Y)$ and the classes of $E_1,\ldots, E_j$ in the lattice $S(\tau)$ is one. 
\end{proof} 
\begin{lemma}\label{even1}
If $x\in  S(\sigma^2)$,  then  $x\cdot \sigma^*(x)$  is even.
\end{lemma}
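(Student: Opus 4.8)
The plan is to descend to the quotient by $\sigma^2$ and to use the standard description of the invariant Picard group of a double cover. Assume first that $\sigma$ is purely non-symplectic, so that $\sigma^2$ is a non-symplectic involution; by Proposition \ref{rel4} its fixed locus $B=B_1\sqcup\dots\sqcup B_k$ is a nonempty disjoint union of smooth curves. Since $\Fix(\sigma^2)$ contains no isolated point, $Y:=X/\langle\sigma^2\rangle$ is a smooth projective surface and the quotient map $\pi\colon X\to Y$ is a double cover branched exactly along $B$; moreover $S(\sigma^2)=\Pic(X)^{\sigma^2}$ by Proposition \ref{lat}. I would invoke the classical fact that $\Pic(X)^{\sigma^2}$ is generated by $\pi^*\Pic(Y)$ together with the classes $\tilde B_i:=(\pi^{-1}(B_i))_{\mathrm{red}}$, where each $\tilde B_i\cong B_i$ is a smooth curve on $X$ with $\pi^*B_i=2\tilde B_i$ in $\Pic(X)$.

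The next step is to record the parities of the relevant intersection numbers: $\pi^*a\cdot\pi^*b=2(a\cdot b)$ is even, $\tilde B_i\cdot\tilde B_j=\tfrac12(B_i\cdot B_j)=0$ for $i\neq j$ since the $B_i$ are disjoint, and $\tilde B_i^2=2g(B_i)-2$ is even by adjunction on the K3 surface $X$; thus the only products among these classes that can be odd are $\pi^*a\cdot\tilde B_i=a\cdot B_i$. Let $\bar\sigma$ be the automorphism of $Y$ induced by $\sigma$; since $\bar\sigma^2=\overline{\sigma^2}=\id_Y$, the map $\bar\sigma$ preserves $B$ and permutes its components by an involution $\tau$, say $\bar\sigma(B_i)=B_{\tau(i)}$, and correspondingly $\sigma(\tilde B_i)=\tilde B_{\tau(i)}$. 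Given $x\in S(\sigma^2)$, write $x=\pi^*c+\sum_i\varepsilon_i\tilde B_i$ with $c\in\Pic(Y)$ and $\varepsilon_i\in\IZ$; then $\sigma(x)=\pi^*\bar\sigma(c)+\sum_i\varepsilon_i\tilde B_{\tau(i)}$, and expanding $x\cdot\sigma(x)$ and reducing modulo $2$ the contributions $\pi^*c\cdot\pi^*\bar\sigma(c)$ and $\sum_{i,j}\varepsilon_i\varepsilon_j\,\tilde B_i\cdot\tilde B_{\tau(j)}$ drop out, so that
$$
x\cdot\sigma(x)\equiv\sum_i\varepsilon_i\,(c\cdot B_{\tau(i)})+\sum_i\varepsilon_i\,\bigl(\bar\sigma(c)\cdot B_i\bigr)\pmod 2 .
$$
Since $\bar\sigma$ is an involutive automorphism of $Y$ one has $\bar\sigma(c)\cdot B_i=c\cdot\bar\sigma(B_i)=c\cdot B_{\tau(i)}$, so the two sums are equal and their sum is even; hence $x\cdot\sigma(x)$ is even.

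The step that really needs care is the structural fact used at the start, namely that $\Pic(X)^{\sigma^2}$ is generated by $\pi^*\Pic(Y)$ and the half-branch classes $\tilde B_i$: this is classical for double covers of a smooth surface branched over a smooth divisor (one first checks $2x\in\pi^*\Pic(Y)$ for every $x\in\Pic(X)^{\sigma^2}$, and the $\tilde B_i$ generate the resulting $2$-elementary quotient), and I would either cite it or include a short direct proof. The remaining case, in which $\sigma^2$ is symplectic, can be treated by the same method after replacing $Y$ by the minimal resolution $\tilde Y$ of $X/\sigma^2$ and $X$ by its blow-up at the eight fixed points of $\sigma^2$; the resolution curves then have self-intersection $-1$, but the extra odd term cancels because $\Fix(\sigma)=\emptyset$ forces the involution induced by $\sigma$ on those eight curves to be fixed-point-free.
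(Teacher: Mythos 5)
Your argument for the main case (\(\sigma\) purely non-symplectic) is correct, but it is a genuinely different route from the paper's. The paper disposes of the lemma in three lines of lattice theory: since \(\sigma^2=\id\) on \(S(\sigma^2)\), it writes \(x=\frac{a+b}{n}\) with \(a\) in the \(+1\)-eigenspace \(S(\sigma)\) and \(b\) in the \((-1)\)-eigenspace, and computes \(x\cdot\sigma(x)=\frac{2a^2}{n^2}-x^2\). You instead descend to the quotient \(Y=X/\langle\sigma^2\rangle\) and use the description of \(\Pic(X)^{\sigma^2}\) as generated by \(\pi^*\Pic(Y)\) and the half-branch classes \(\tilde B_i\); your parity bookkeeping (projection formula for the cross terms, disjointness and evenness of \(\tilde B_i^2\) by adjunction, and the identity \(\bar\sigma(c)\cdot B_i=c\cdot B_{\tau(i)}\) since \(\bar\sigma\) is an involutive automorphism of \(Y\)) is accurate. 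The structural fact you flag is indeed true here and is provable exactly along the lines you indicate: any \(\sigma^2\)-invariant class on a K3 is represented by a line bundle \(L\) with \((\sigma^2)^*L\cong L\), the obstruction to linearizing lies in \(H^2(\IZ/2,\IC^*)=0\), and twisting by \(\calO(\tilde B_j)\) flips the sign of the fiberwise action along \(\tilde B_j\) only, so after subtracting a suitable sum of \(\tilde B_i\) the bundle descends. So for all \(x\in S(\sigma^2)\subset\Pic(X)\) — which, by Proposition \ref{lat}, is the whole of \(S(\sigma^2)\) in the purely non-symplectic case, and is the only case in which the lemma is actually invoked (Theorem \ref{jac}) — your proof is complete and arguably more illuminating: it explains why the statement is not a formal fact about involutions on even lattices (the swap on \(U\) already violates it), whereas the paper's proof hides this geometric input.

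The sketch for the case when \(\sigma^2\) is symplectic, however, has a genuine gap. There \(\sigma^2\) acts trivially on the transcendental lattice, so \(T(X)\subset S(\sigma^2)\) and \(S(\sigma^2)\) is \emph{not} contained in \(\Pic(X)\); consequently a general \(x\in S(\sigma^2)\) is not a divisor class, and the descent/linearization statement you rely on — which is intrinsically about line bundles on the blow-up \(\hat X\) — says nothing about it. To push your method through in that case you would need the integral splitting of \(H^2(\hat X,\IZ)^{\hat\iota}\) as \(\hat\pi^*H^2(\hat Y,\IZ)\) plus exceptional classes, a topological statement about Nikulin involutions that is much more delicate than the Picard-group fact and is not covered by your cited argument (your observation that \(\Fix(\sigma)=\emptyset\) makes the permutation of the eight exceptional curves fixed-point free is correct and would handle the odd self-intersections, but only once such a decomposition is available). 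So either restrict the lemma to algebraic classes — which suffices for every application in the paper — or supply a separate argument (for instance the paper's eigenspace decomposition, or a lattice-theoretic treatment of the \(\pm1\)-eigenlattices of \(\sigma\) on \(S(\sigma^2)\)) for the symplectic-square case.
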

\begin{proof}
We use Lemma \ref{pic} and write $x=\pi^*(y)+\sum a_i e_i$, where $a_i\in \IZ$ and $e_i$ is the class of $E_i$. Then $\sigma^*(x)=\pi^*\sigma^*(y)+\sum a_i\sigma^*(e_i)$,
where we use the fact that $\pi^*\sigma^*=\sigma^*\pi^*$ and we denote 
again with $\sigma$ the induced automorphism on $Y$. Then we get:
$$
x\cdot\sigma^*(x)=\pi^*(y)\cdot\pi^*\sigma^*(y)+\sum a_i e_i\cdot \sum a_i\sigma^*(e_i)+
 \pi^*(y)\cdot \sum a_i\sigma^*(e_i)+\pi^*\sigma^*(y)\cdot \sum a_i e_i
$$ 
and we have
$$
\pi^*(y)\cdot\pi^*\sigma^*(y)=2y\cdot\sigma^*(y),
$$
which is even, and clearly the product $\sum a_i e_i\cdot \sum a_i\sigma^*(e_i)$ is even too since the $E_i$'s are disjoint. Finally
$$
\pi^*(y)\cdot \sum a_i\sigma^*(e_i)=\pi^*(\sigma^*)^2(y)\cdot \sum a_i \sigma^*(e_i)=\pi^*\sigma^*(y)\cdot \sum a_i e_i,
$$
where the last equality follows from the fact that $\sigma^*$ is an isometry. This concludes the proof. \end{proof}

We formulate the following useful lemma that generalizes \cite[Lemma 8.1]{jimmy}. 
Its proof in in fact the same as there, since it does not depend on the
order of the automorphism.
\begin{lemma}\label{jimmy}
Let $T=\sum_i R_i$ be a tree of smooth rational curves on a K3 surface $X$ such that each $R_i$ 
is invariant under the action of a purely non-symplectic automorphism $\sigma$ of order $k$. 
Then, the points of intersection of the rational curves $R_i$ are fixed by $\sigma$ and the action 
at one fixed point determines the action on the whole tree. 
\end{lemma}
\begin{proof}
Since $\sigma$ is purely non-symplectic, then it acts  on the vector space of holomorphic two forms of $X$ 
as the multiplication by a primitive $k$-th root of unity $\eta$. 
Let $p$ be the unique point of intersection of two rational curves $R_1$ and $R_2$. 
This is clearly fixed by $\sigma$, and $\sigma$ is locally given in $p$ by the matrix (see \cite[\S 5]{Nikulin1}):
$$\left(\begin{array}{cc}
\eta^{k+1-j} &0\\
0& \eta^{j}
\end{array}
\right),
$$
where the two eigenvectors correspond to the curves $R_1$ and $R_2$.
Assume that $R_2$ corresponds to the second eigenvector,
so that the action of $\sigma$ on the tangent space at $p$ in $R_2$ is $\mathbb{C}\rightarrow \mathbb{C},\,z\mapsto \eta^j z$. 
Let now $q$ be the point of intersection of $R_2$ with a rational curve $R_3\not=R_1$. 
Since $R_2\cong\mathbb{P}^1$ we can fix coordinates $(z_0:z_1)$ so that 
the action of $\sigma$ in $ \mathbb{P}^1\backslash \{q\}$ is $z_1/z_0\mapsto \eta^j z_1/z_0$ 
(in these coordinates $p=0$ and $q=\infty$). 
Thus the action on $\mathbb{P}^1\backslash \{p\}$ is given by $z_0/z_1\mapsto \eta^{k-j} z_0/z_1$. 
Since $\sigma$ multiplies holomorphic two forms by $\eta$, 
then its local action at $q$ is given by the matrix: 
$$\left(\begin{array}{cc}
\eta^{k-j} &0\\
0& \eta^{j+1}
\end{array}
\right),
$$
so it is uniquely determined.
\end{proof}
\begin{remark}
In the case of an automorphism of order $4$, with the assumption of Lemma \ref{jimmy}, 
the local actions at the intersection points of the curves $R_i$ appear in the following order:
$$\ldots, \left(\begin{array}{cc}
1 &0\\
0& i
\end{array}
\right), \left(\begin{array}{cc}
-i &0\\
0& -1
\end{array}
\right), \left(\begin{array}{cc}
-1 &0\\
0& -i
\end{array}
\right), \left(\begin{array}{cc}
i &0\\
0& 1
\end{array}
\right), \left(\begin{array}{cc}
1 &0\\
0& i
\end{array}
\right), \ldots
$$
\end{remark}

\section{Elliptic fibrations}
In this section we will study elliptic fibrations on K3 surfaces carrying a purely non-symplectic automorphism of order four. 
The following result is proved by means of an argument contained in the proof of \cite[Proposition 2.9]{DKe}.
  \begin{pro}\label{ellinv}
  Let $\sigma$ be an automorphism of a K3 surface $X$.
 If the rank of the invariant lattice of $\sigma^*$ in $H^2(X,\IZ)$ is bigger than $4$, then there exists a $\sigma$-invariant elliptic fibration $\pi:X\map \IP^1$.
 \end{pro}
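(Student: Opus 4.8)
The goal is to produce, from an automorphism $\sigma$ with $\rank S(\sigma)\geq 5$, a $\sigma$-invariant elliptic fibration. The strategy is the classical one: construct a $\sigma$-invariant divisor class $D$ with $D^2=0$, $D\neq 0$ and $D$ nef, and then invoke the fact that on a K3 surface such a class gives an elliptic (or quasi-elliptic, but we are over $\IC$) fibration $\pi\colon X\to\IP^1$ with $D$ as fiber class; invariance of the class under $\sigma^*$ forces $\sigma$ to permute the fibers, i.e. to descend to an automorphism of the base $\IP^1$, so the fibration is $\sigma$-invariant in the required sense. First I would note, as in Proposition \ref{lat}, that $S(\sigma)$ is a hyperbolic lattice contained in $\Pic(X)$ whenever $\sigma$ is non-symplectic; but here $\sigma$ is an arbitrary automorphism, so one must first handle the possibility that $\sigma$ is symplectic or acts with finite order on the transcendental part — in any case, since $X$ carries an automorphism with a rank $\geq 5$ invariant sublattice of signature $(1,\rho-1)$ type, $S(\sigma)$ is a hyperbolic lattice of rank $\geq 5$ sitting inside $\Pic(X)$ (one uses that a positive self-intersection invariant class exists, averaging an ample class over the cyclic group generated by $\sigma$).

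**Producing an isotropic class.** The key input is a lattice-theoretic fact: an even hyperbolic lattice of rank $\geq 5$ always represents $0$ nontrivially — indeed by Meyer's theorem any indefinite quadratic form over $\IQ$ in $\geq 5$ variables is isotropic. So pick a primitive isotropic $e\in S(\sigma)$. This $e$ is automatically $\sigma^*$-invariant. The remaining work is to move $e$ into the nef cone without leaving $S(\sigma)$: standard K3 theory says that the $W(X)$-orbit (Weyl group generated by reflections in $(-2)$-classes) of any isotropic class meets the closure of a chosen Weyl chamber, and one can choose the chamber to be the nef cone. The subtlety is that reflections are not $\sigma$-equivariant in general. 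The cleanest fix — this is the DKe/DolgachevKond\'o argument the paper alludes to — is to work with the action of $\sigma$ on the nef cone: $\sigma^*$ preserves the nef cone and acts on it, and after possibly replacing $e$ by $w(e)$ for a suitable $w\in W(X)$ and then averaging/adjusting, one produces a $\sigma^*$-invariant nef class $D$ with $D^2=0$. Concretely: take $e$ isotropic in $S(\sigma)$; its $W(X)$-translate into $\overline{\mathrm{Nef}(X)}$ gives a nef isotropic $D_0$; since $\sigma^*$ permutes the chambers and fixes the chamber $\mathrm{Nef}(X)$ (it preserves ampleness), $\sigma^*D_0$ is again nef and isotropic and lies in the same $W(X)$-orbit, and uniqueness of the nef representative in a $W(X)$-orbit forces $\sigma^*D_0=D_0$.

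**From the class to the fibration.** Once a nef isotropic nonzero $D$ is in hand, the theory of linear systems on K3 surfaces (the relevant statement is in Pjateckij-Shapiro–Shafarevich / Saint-Donat) gives that some positive multiple of $D$ is base-point free and defines a morphism $\pi\colon X\to\IP^1$ whose general fiber is a smooth genus one curve — an elliptic fibration after choosing a section or just as a genus one fibration; over $\IC$ there is no quasi-elliptic case. Because $[\,\text{fiber}\,]=D$ is $\sigma^*$-fixed, $\sigma$ sends fibers to fibers, hence induces an automorphism $\bar\sigma$ of $\IP^1$, so $\pi\circ\sigma=\bar\sigma\circ\pi$ and the fibration is $\sigma$-invariant in the sense used in Section 2.

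**Main obstacle.** The genuinely delicate point is the equivariance in the middle step: ensuring the isotropic class can be taken simultaneously primitive, nef, \emph{and} $\sigma^*$-invariant. Naively intersecting $S(\sigma)$ with the nef cone need not produce an isotropic class, and Weyl reflections used to reach the nef cone are not $\sigma$-equivariant. The resolution rests on the uniqueness of the nef representative within a $W(X)$-orbit of an isotropic class together with the fact that $\sigma^*$ preserves $\mathrm{Nef}(X)$; this is exactly the mechanism isolated in \cite[Proposition 2.9]{DKe}, and I would reproduce that argument rather than re-derive it. Everything else (Meyer's theorem for the isotropic vector, Saint-Donat for the fibration, descent to $\IP^1$) is standard and quotable.
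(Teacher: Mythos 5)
Your proposal follows essentially the same route as the paper's proof: a primitive isotropic vector in $S(\sigma)$ via Meyer's theorem (the paper cites \cite[Corollary 2, pag. 43]{Se}), Weyl reflections in $(-2)$-curves to reach the unique nef representative of its orbit (\cite[\S 6, Theorem 1]{PSS}), invariance of that representative because $\sigma^*$ preserves nefness and normalizes the reflection group, and then the standard linear-system argument producing the elliptic fibration, exactly as in the argument of \cite[Proposition 2.9]{DKe} that the paper reproduces. Your extra remarks (hyperbolicity of $S(\sigma)$, the caveat about symplectic $\sigma$) are not needed for, and do not alter, this argument, so the proposal matches the paper's proof.
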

 \begin{proof} As before, we denote by $S(\sigma)$ the invariant lattice of $\sigma^*$ in $H^2(X,\IZ)$.
 By \cite[Corollary 2, pag. 43]{Se}, since $\rk S(\sigma)\geq 5$,  there exists a primitive isotropic vector $x\in S(\sigma)$.
 After applying a finite number of reflections with respect to $(-2)$-curves we obtain a nef class $x'$ which is uniquely determined by $x$ (see \cite[\S 6, Theorem 1]{PSS}).  Observe that $x'$ is primitive and $x'^2=0$.
It is easy to see that $\sigma^*$ acts 
on the orbit of $x$ with respect to the reflection group. Since $x'$ is the unique 
nef member in the orbit and any automorphism preserves nefness, then
$\sigma^*(x') = x'$. The morphism associated to $x'$ is a  $\sigma$-invariant elliptic fibration on $X$.
\end{proof}

 Let $\pi:X\map \IP^1$ be an elliptic fibration such that any of its fibers is invariant for $\sigma$ and contains at least a fixed point. The last assumption is not necessary if the fibration is jacobian: indeed if $\sigma$ is fixed points free on the generic fiber, then it acts as a translation on it and it can be easily proved, by writing explicitly a holomorphic 2-form, that the automorphism would be symplectic. 
 If $\sigma$ has order four, then the generic fiber of $\pi$ contains two fixed points for $\sigma$ and four fixed points of $\sigma^2$.
Thus $\pi$ has two bisections (not necessarily irreducible): a curve $E_{\sigma}\subset \Fix(\sigma)$ and a curve $E_{\sigma^2}\subset \Fix(\sigma^2)$.
 We now describe the singular fibers of the elliptic fibration and the action of $\sigma$ on them.

\begin{pro}\label{ell} Let $X$ be a K3 surface with a non-symplectic order four automorphism $\sigma$ and $\pi:X\map \IP^1$ be an elliptic fibration such that $\sigma$ preserves each fiber of $\pi$ and has a fixed point on it. 
Then the singular fibers of $\pi$ are of the following Kodaira types:
\begin{enumerate}[$\bullet$]
\item ${\rm III:}$ $R_1\cup R_2$, where either\\
a) the $R_i$'s are exchanged by $\sigma$, $E_{\sigma^2}$ intersects each $R_i$ at one point and $E_{\sigma}$ intersects in $R_1\cap R_2$ or \\
b) the $R_i$'s are $\sigma$-invariant, $E_{\sigma}$ intersects each $R_i$ at one point and $E_{\sigma^2}$ intersects in $R_1\cap R_2$. 
\item ${\rm I_0^*:}$ $2R_1+R_2+R_3+R_4+R_5$, where either\\
a)  $R_2, R_3$ are $\sigma$-invariant (intersected by $E_{\sigma}$) and $R_4, R_5$ are exchanged by $\sigma$ (intersected by $E_{\sigma^2}$) or\\
b)  $R_2,\dots, R_5$ are permuted by $\sigma$, $E_{\sigma}$ and $E_{\sigma^2}$ intersect $R_1$. 
\item ${\rm III^*:}$ $R_1+2R_2+3R_3+4R_4+2R_5+3R_6+2R_7+R_8$, where either\\
a) $\sigma$ preserves each irreducible component of the fiber, $R_4\subset \Fix(\sigma)$, $R_2, R_7$ contain two isolated fixed points, $E_{\sigma}$ intersects $R_1, R_8$, $E_{\sigma^2}$ intersects $R_5$ or\\
b) $\sigma$ exchanges the two branches of the fiber, $E_{\sigma^2}$ intersects $R_1, R_8$ and $E_{\sigma}$ intersects $R_5$.\\
\end{enumerate}
\end{pro}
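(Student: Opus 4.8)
The plan is to first pin down the generic fiber and the list of possible singular fibers, and then to analyse the action of $\sigma$ on each of them type by type.

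Since $\sigma$ has a fixed point on every fiber, $\Fix(\sigma)\neq\emptyset$, so by the result above on automorphisms with symplectic square $\sigma^2$ is not symplectic and $\sigma$ is purely non-symplectic; hence $\sigma^2$ is a non-symplectic involution and, by Proposition~\ref{rel4}, $\Fix(\sigma^2)$ is a disjoint union of smooth curves while $\Fix(\sigma)\subset\Fix(\sigma^2)$ is a disjoint union of smooth curves and isolated points. Let $F$ be a smooth fiber. As $F$ varies in a one-dimensional family it is not one of the finitely many components of $\Fix(\sigma^2)$, so $\sigma^2|_F\neq\id$ and therefore $\sigma|_F$ has order exactly $4$ (order $1$ would put $F\subset\Fix(\sigma)$). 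Taking the fixed point of $\sigma|_F$ as origin, $\sigma|_F$ is a group automorphism of order $4$, which forces $j(F)=1728$, $\sigma|_F$ to be multiplication by $\pm i$ and $\sigma^2|_F$ to be the inversion; in particular $\Fix(\sigma)$ and $\Fix(\sigma^2)$ meet $F$ in two and in four points, so $\pi$ carries the two bisections $E_\sigma\subset\Fix(\sigma)$ and $E_{\sigma^2}\subset\Fix(\sigma^2)$ discussed above. Since every smooth fiber has $j=1728$ the function $j$ is constant, $\pi$ has no multiple fibers (being a fibration of a K3 surface), and the only reducible Kodaira fibers with $j=1728$ are $III$, $I_0^*$ and $III^*$; these are therefore the only singular fibers. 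Finally $\sigma$ restricts to an automorphism of order exactly $4$ of each such fiber $F_0$: if $\sigma^i|_{F_0}=\id$ for $i=1$ or $2$, then $F_0$ and $E_{\sigma^i}$ would be disjoint components of $\Fix(\sigma^i)$, contradicting $E_{\sigma^i}\cdot F_0=2$.

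It remains to describe, for each of the three types, the action of $\sigma$ on $F_0=\sum_i m_iR_i$. The induced automorphism of the weighted dual graph ($\widetilde A_1$, $\widetilde D_4$ or $\widetilde E_7$) has order dividing $4$; since $\mathrm{Aut}(\widetilde A_1)=\IZ/2$, $\mathrm{Aut}(\widetilde E_7)=\IZ/2$ and $\mathrm{Aut}(\widetilde D_4)=S_4$ acting on the four legs, a priori $\sigma$ can induce: the identity or the transposition on $\widetilde A_1$; the identity or the flip on $\widetilde E_7$; and the identity, a transposition, a double transposition or a $4$-cycle on the legs of $\widetilde D_4$. To cut this down to exactly the configurations a), b) (and c) for $III^*$) of the statement I would combine: (i) the local normal forms $A_{4,0}$, $A_{4,1}$, which record whether a fixed point lies on a fixed curve or is isolated and which eigendirections are fixed; (ii) the disjointness of the curve components of $\Fix(\sigma)$, which forbids $\sigma$ from fixing pointwise two components of $F_0$ that meet; (iii) that $\sigma^2$ is a non-symplectic involution, so each $R_i$ is either pointwise fixed by $\sigma^2$ or meets $\Fix(\sigma^2)$ in finitely many points with local model $\mathrm{diag}(-1,1)$; (iv) the numerical constraint $\sum_i m_i(E_\bullet\cdot R_i)=2$, which forces each bisection to meet only components of multiplicity $1$ or $2$ and only as prescribed; and (v) that $\sigma$ fixes every node of $F_0$ whose two branches it does not exchange, the local action there being then pinned down by compatibility with those branches. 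A convenient concrete model for these computations is the Weierstrass equation $y^2=x^3+a_4(t)x$, $a_4\in H^0(\IP^1,\calO(8))$, of the jacobian fibration, which has the same singular fibers as $\pi$ and on which the induced automorphism is $(x,y)\mapsto(-x,iy)$: its singular fibers lie over the zeros of $a_4$ and are of type $III$, $I_0^*$, $III^*$ according as the vanishing order is $1$, $2$, $3$, arising from $A_1$, $D_4$, $E_7$ surface singularities whose $\sigma$-equivariant resolutions can be written down explicitly; the ``exchanged/permuted-component'' configurations and case c) of $III^*$ occur precisely when $\pi$ has no section.

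The main obstacle is carrying out this second step exhaustively: ruling out the a priori possible graph automorphisms and incidence patterns that do not actually occur (for instance the identity, or a double transposition, on the legs of $\widetilde D_4$), and then, for each surviving possibility, propagating the local eigenvalue data consistently along the chain of components. The delicate points are the high-multiplicity components $2R_1$ of $I_0^*$ and $4R_4$ of $III^*$, where one must check which candidates for $d\sigma$ are compatible with $\sigma$ permuting all the branches attached to them; and, for $III^*$, the analogous analysis at $R_2$ and $R_7$ — deciding via $A_{4,0}$ versus $A_{4,1}$ whether these multiplicity-two components are fixed pointwise (case a)) or merely carry two isolated fixed points (case b)) — which is what separates the two admissible internal configurations. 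Once the local pictures at all the nodes of a given fiber are shown to glue, reading off which $R_i$ lie in $\Fix(\sigma)$, which carry isolated fixed points, and where $E_\sigma$ and $E_{\sigma^2}$ meet the fiber is routine and produces exactly the list in the statement.
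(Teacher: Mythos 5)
Your first paragraph is fine and matches the paper's opening step: on a smooth fiber the hypothesis forces $\sigma$ to restrict to an order four automorphism fixing a point, so the $j$-invariant is constant equal to $1728$, and Kodaira's classification leaves only $III$, $I_0^*$, $III^*$ as singular fibers; the existence of the two bisections $E_\sigma$, $E_{\sigma^2}$ is also as in the paper. But from that point on you do not prove the proposition: the whole content of the statement is the precise list of configurations a), b), c) — which components are pointwise fixed, which carry isolated fixed points, which are exchanged, and where exactly $E_\sigma$ and $E_{\sigma^2}$ meet each fiber type — and your text only enumerates the tools (local models $A_{4,0}$, $A_{4,1}$, disjointness of fixed curves, the multiplicity count $\sum_i m_i(E_\bullet\cdot R_i)=2$, behaviour at nodes) and then explicitly defers the exhaustive case analysis, calling it ``the main obstacle'' and asserting the rest is ``routine''. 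That case analysis \emph{is} the proof. For instance, for $I_0^*$ you must actually rule out that the central component $R_1$ is pointwise fixed by $\sigma$ (in the paper: if it were, each of the four legs would need a second $\sigma$-fixed point lying on $E_\sigma\cup E_{\sigma^2}$, which is impossible because $\sigma$, acting as the deck involution of $E_{\sigma^2}\to\IP^1$, exchanges the two distinct points of $E_{\sigma^2}\cap F$), and then split according to whether $\sigma^2$ or $\sigma$ has order two on $R_1$ to get exactly a) and b); for $III^*$ with all components preserved you must show $R_4\subset\Fix(\sigma)$ (it carries three fixed points), that $E_{\sigma^2}$ can contribute at most one $\sigma$-fixed point on the fiber so that necessarily $E_\sigma$ meets $R_1,R_8$ and $E_{\sigma^2}$ meets $R_5$, and that $R_2,R_7$ are either fixed curves or carry two isolated fixed points. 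None of these eliminations or incidence determinations is carried out in your proposal, so the classification claimed in the statement is not established.

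A secondary concern is your suggested ``concrete model'': passing to the Weierstrass form $y^2=x^3+a_4(t)x$ of the Jacobian fibration cannot by itself settle the dichotomies in the statement, because the fibration $\pi$ is not assumed jacobian and the distinctions between cases a), b), c) are exactly about how the bisections $E_\sigma$ and $E_{\sigma^2}$ meet the fibers — data that has no counterpart on the Jacobian (where moreover you would still have to justify that the induced automorphism is $(x,y)\mapsto(-x,iy)$ and that its fiber configurations transfer back to $\pi$). The paper instead argues directly on $X$, fiber type by fiber type, using the bisections and the local fixed-point models; to complete your proof you would need to write out that analysis (or an equivalent one) in full.
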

\vspace{-0.6cm}

\begin{proof}
By the previous argument,  the restriction of $\sigma$ to the generic fiber of $\pi$ has order four and two fixed points. Thus any smooth fiber of $\pi$ has $j$-invariant equal to $1$. By the Kodaira classification it follows that the singular fibers of $\pi$ are either of type ${\rm I_0^*, III}$,  or ${\rm III^*}$. We now analyze the possible actions of $\sigma$ on these fibers.
 
If $F$ is a reducible fiber of type ${\rm I_0^*}$, then the component $R_1$ is clearly $\sigma$-invariant.
Observe that $R_1$ is not fixed by $\sigma$, since otherwise each $R_i$, $i=2,\dots,5$, should contain a fixed point for $\sigma$  in the intersection with either $E_{\sigma^2}$ or $E_{\sigma}$. This is absurd because $\sigma$ exchanges the two (distinct) points in $F\cap E_{\sigma^2}$. 
Thus $\sigma$ has either order two or four on $R_1$.
If $\sigma^2=\id$ on $R_1$, then each $R_i$, $i=2,\dots,5$ contains a fixed point of $\sigma^2$ in the intersection with either $E_{\sigma^2}$ or $E_{\sigma}$, thus we are in case ${\rm I_0^*} a)$.
If $\sigma$ has order $4$ on $R_1$, then $\sigma$ permutes the curves $R_i$, $i=2,\dots,5$  and $R_1$ contains two fixed points for $\sigma$ in the intersection with $E_{\sigma}$ and $E_{\sigma^2}$, giving case ${\rm I_0^*} b)$.

If $F$ is a fiber of type ${\rm III^*}$, then $R_4$ and $R_5$ are clearly $\sigma$-invariant.
If $\sigma$ preserves each irreducible component of $F$, then $R_4\subset \Fix(\sigma)$ (since it contains $3$ fixed points) and, since $E_{\sigma^2}$ contains at most a fixed point, $E_{\sigma}$ intersects $R_1, R_8$ and $E_{\sigma^2}$ intersects $R_5$. 
By Lemma \ref{jimmy}, the only curve fixed by $\sigma$ is $R_4$, thus the curves $R_2$ and $R_7$ contain each two isolated fixed points. 
This gives case ${\rm III^*} a)$.
Otherwise, if $\sigma$ exchanges the two branches of the fiber, then $\sigma^2=\id$ on $R_4$, $E_{\sigma^2}$ intersects $R_1$ and $R_8$ in two points exchanged by $\sigma$ and $E_{\sigma}$ intersects $R_5$.
The case of a fiber of type ${\rm III}$ can be discussed in a similar way. 
\end{proof}
We will denote by $g_{\sigma}$ and $g_{\sigma^2}$ the genus of $E_{\sigma}$ and $E_{\sigma^2}$ respectively, by $n$ the number of isolated points in $\Fix(\sigma)$, by $k$ the number of smooth rational curves in $\Fix(\sigma)$ and by $2a$ the number of smooth rational curves in $\Fix(\sigma^2)$ exchanged by $\sigma$.
\begin{cor}\label{cor} Under the hypotheses of Proposition \ref{ell}, we have the following possibilities for the invariants defined above.
 \begin{itemize}
\item If $E_{\sigma}$ is irreducible and $E_{\sigma^2}$ is reducible (hence the union of two smooth rational curves exchanged by $\sigma$):
$$
\begin{array}{c|ccc|l}
g_{\sigma} &  n & k & a & \mbox{reducible fibers} \\
\hline
3 & 0 & 0 & 1 & 8\, {\rm III }\, a)\\
\hline
 2  & 2 & 0 & 1 & 6\,{\rm III}\, a)+{\rm I_0^*}\, a)\\
 & 2 & 0 & 2 & 5\,{\rm III} \,a)+{\rm III^*} \,b)\\
\hline
1  & 4 & 0 & 1   & 4\,{\rm III}\, a)+2\,{\rm I_0^*}\, a)\\
& 4 & 0 & 2   & 3\,{\rm III}\,a)+{\rm I_0^*} \,a)+{\rm III^*} \,b)\\
& 4 & 0 & 3 & 2\,{\rm III}\,a)+2\,{\rm III^*}\,b)\\
\hline
 0& 6& 1 & 1   & 2\,{\rm III}\,a)+3\,{\rm I_0^*}\,a)\\
  & 6& 1 & 2   & {\rm III}\,a)+2\,{\rm I_0^*}\,a)+{\rm III^*}\,b)\\
 & 6& 1 & 3   & {\rm I_0^*}\,a)+2\,{\rm III^*}\,b)\\
\end{array}
$$
\end{itemize}

\begin{itemize}
\item If $E_{\sigma}$ is reducible and $E_{\sigma^2}$ is irreducible: $a=0$ and  $$
\begin{array}{c|cc|l}
g_{\sigma^2} &  n & k & \mbox{reducible fibers} \\
\hline
3 & 8 & 2  & 8\,{\rm III}\,b)\\
\hline
2   & 8 & 2 & 6\,{\rm III}\, b)+{\rm I_0^*}\,a)\\
 & 10&3  & 5\,{\rm III}\, b)+{\rm III^*}\,a)\\
\hline
 1  & 8 & 2 & 4\,{\rm III}\,b)+2\,{\rm I_0^*}\,a)  \\
    & 10 & 3 & 3\,{\rm III}\,b)+{\rm I_0^*}\,a)+{\rm III^*}\,a)\\
  &12 & 4 &  2\,{\rm III}\,b)+2\,{\rm III^*}\,a)\\
\hline
0 & 8 & 2 & 2\,{\rm III}\,b)+3\,{\rm I_0^*}\,a)\\
 & 10 & 3 & {\rm III}\,b)+2\,{\rm I_0^*}\,a)+{\rm III^*}\,a)\\
 & 12 & 4 & {\rm I_0^*}\,a)+2\,{\rm III^*}\,a)
\end{array}
$$
\end{itemize}

\begin{itemize}
\item If $E_{\sigma}$ and $E_{\sigma^2}$ are both irreducible:
$$
\begin{array}{cc|ccc|l}
g_{\sigma} & g_{\sigma^2} & n & k & a & \mbox{reducible fibers} \\
\hline
2 & 0 & 2 & 0 & 0 & 6\,{\rm III}\,a)+2\,{\rm III}\,b)\\
\hline
1 & 1 & 4 & 0 & 0 & 4\,{\rm III}\,a)+4\,{\rm III}\,b) \\
  &     & 4 & 0 & 0 & 4\,{\rm I_0^*}\,b)\\
\hline
1  & 0 &  6 & 1 &0 & 3\,{\rm III}\,a)+{\rm I_0^*}\,b)+{\rm III^*}\,a)\\
    &    & 6 & 1 & 0 & 4\,{\rm III}\,a)+{\rm III}\,b)+{\rm III^*}\,a)\\
  &    & 4 & 0 & 0 & 4\,{\rm III}\,a)+2\,{\rm III}\,b)+{\rm I_0^*}\,a)\\
     &    & 4 & 0 & 1 & 3\,{\rm III}\,a)+2\,{\rm III}\,b)+{\rm III^*}\,b)\\
\hline 
0& 2 & 6 & 1 & 0 & 2\,{\rm III}\,a)+6\,{\rm III}\,b)\\
  &    & 6 & 1 & 0 & 4\,{\rm III}\,b)+2\,{\rm I_0^*}\,b)\\
\hline  
0 & 1     & 8 & 2 & 0 & 2\,{\rm III}\,a)+3\,{\rm III}\,b)+{\rm III^*}\,a)\\
 &  & 6 & 1 & 0 & 2\,{\rm III}\,a)+4\,{\rm III}\,b)+{\rm I_0^*}\,a)\\
 &     & 6 & 1 & 1 & {\rm III}\,a)+4\,{\rm III}\,b)+{\rm III^*}\,b) \\
\hline
0 & 0   & 10 & 3 & 0 & 2\,{\rm III}\,a)+2\,{\rm III^*}\,a)\\
 &   & 8 & 2 & 0 & 2\,{\rm III}\,a)+{\rm III}\,b)+{\rm I_0^*}\,a)+{\rm III^*}\,a)\\
  &   & 8 & 2 & 1 & {\rm III}\,a)+{\rm III}\,b)+{\rm III^*}\,a)+{\rm III^*}\,b)\\
   && 6 & 1 & 0 & 2\,{\rm III}\,a)+2\,{\rm III}\,b)+2\,{\rm I_0^*}\,a)\\
  &   & 6 & 1 & 1 & {\rm III}\,a)+2\,{\rm III}\,b)+{\rm I_0^*}\,a)+{\rm III^*}\,b)\\
 &   &  6 & 1 & 2 & 2\,{\rm III}\,b)+2\,{\rm III^*}\,b)
  \end{array}$$


\item If both $E_{\sigma}$ and $E_{\sigma^2}$ are reducible: $n=8, k=2, a=1$ and the reducible fibers  are $4, {\rm I_0^*}$ of type $a)$.
\end{itemize}
\end{cor}
\begin{proof}
Observe that the restrictions of $\pi$ to $E_{\sigma}$ and to $E_{\sigma^2}$ are double covers of $\IP^1$.
If $E_{\sigma}$ (or $E_{\sigma^2}$) is irreducible, then it contains $2g_{\sigma}+2$ (or $2g_{\sigma^2}+2$) ramification points.
Since a smooth fiber of $\pi$ contains exactly $4$ fixed points for $\sigma^2$, then such ramification points belong to singular fibers of $\pi$, which are classified in Proposition \ref{ell}.
The ramification points of $E_{\sigma}$  belong either to a fiber of type ${\rm III}\, a)$, ${\rm I_0^*}\,b)$ or ${\rm III^*}\,b)$, 
while the ramification points of $E_{\sigma^2}$ belong either to a fiber of type  ${\rm III}\, b)$, ${\rm I_0^*}\, b)$, or ${\rm III^*}\,a)$.
This implies that $g_{\sigma}\leq 3$ (or $g_{\sigma^2}\leq 3$) since otherwise the Euler-Poincar\'e characteristic of the singular fibers 
would give at least $e({\rm III})(2g_{\sigma}+2)=3(2g_{\sigma}+2)>24=e(X)$ (similarly for $g_{\sigma^2}$). 
If $E_{\sigma}$ and $E_{\sigma^2}$ are both irreducible, this implies that $g_{\sigma}, g_{\sigma^2}\leq 2$. We obtain the tables in the statement by enumerating all cases which are compatible with Proposition \ref{ell} and Proposition \ref{rel4}.
\end{proof}
The following result allows, in some cases, to prove that a given elliptic fibration is $\sigma$-invariant.
\begin{lemma}\label{ineq}
Let $X$ be a K3 surface with an automorphism $\sigma$ and $\pi:X\map \IP^1$ be an elliptic fibration whose fibers have class $f$.
If $\sigma^*$ fixes a class  $x\in \Pic(X)$ with $x^2>0$, then
$$(f\cdot\sigma^*(f))x^2\leq 2(x\cdot f)^2.$$

Moreover, if in addition $\pi$ is jacobian and there is a section $s$ of $\pi$ such that $s\cdot x=0$, then the  following holds
$$x^2\leq \frac{2(x\cdot f)^2}{f\cdot \sigma^*(f)+1}.$$
\end{lemma}
\begin{proof}
Let $M$ be the sublattice of $\Pic(X)$ generated by $x$ and $f+\sigma^*(f)$. By Hodge index theorem we have that $\det(M)=2(x^2(f\cdot \sigma^*(f))-2(x\cdot f)^2)\leq 0$ . This gives the first inequality.
The second inequality follows from a similar argument with the lattice generated by $x, f+\sigma^*(f)$ and the class of the section not intersecting $x$.
\end{proof}

\begin{theorem}\label{jac}
Let $\sigma$ be a purely non-symplectic automorphism of order $4$ on a K3 surface $X$ such that  $\Pic(X)=S(\sigma^2)\cong U\oplus L$ and $\sigma^2$ fixes a curve $C$ of genus $g>1$. Then  $X$ carries a jacobian elliptic fibration $\pi:X\map \IP^1$ whose fibers are $\sigma^2$-invariant.  Moreover:
\begin{itemize}
\item If $L$ is isomorphic to a direct sum of root lattices of types $A_1, D_{4n}, E_7$ or $E_8$, then $\pi$ has reducible fibers described by $L$ and a unique section $E\subset \Fix(\sigma^2)$. Moreover, $\pi$ is $\sigma$-invariant if $g> 4$ and the genus of a curve in $\Fix(\sigma)$  is at most $2$.
 
\item If $L$ is not isomorphic to a direct sum of root lattices, then $\pi$ has two sections $E,E'\subset \Fix(\sigma^2)$ and $C$ intersects each fiber in two points. Moreover, $\pi$ is $\sigma$-invariant if $g> 2$ and the genus of a curve in $\Fix(\sigma)$  is at most $2$ if $a=0$.

\end{itemize}
In both cases the automorphism $\sigma^2$ acts as an involution on the simple components of the reducible fibers of $\pi$  and on the fibers of types ${\rm I^*}_{\!\!4n}, {\rm III^*, II^*}$  as in Figure \ref{action}, where $\sigma^2$ acts identically on dotted components and as an involution on the other ones. 
\end{theorem}
\begin{proof} 
The first half of the statement follows from \cite[Lemma 2.1]{kondoell}.  
If $\sigma^2$ fixes a curve $C$ of genus $g>1$, then this curve is transversal to the fibers of $\pi$. This implies that $\sigma^2$ preserves each fiber of $\pi$ and has $4$ fixed points on it. 
If $L$ is a root lattice, then $\pi$ has a unique section $E$ since its Mordell-Weil group $MW(\pi)\cong\Pic(X)/U\oplus L$ is trivial \cite[Theorem 6.3]{SS}.
This implies that $E$ is fixed by $\sigma^2$ and that $C$ intersects each fiber in three points.

Let $x$ be the class of $C$ and $f$ be the class of a fiber of $\pi$.
If $f\not=\sigma^*(f)$, then clearly $f\cdot \sigma^*(f)\geq 2$. 
It follows from Lemma \ref{ineq} that 
$2g-2=x^2\leq  \frac{2(x\cdot f)^2}{f\cdot \sigma^*(f)+1}\leq 6$, thus $g\leq 4$.
Observe that, if $\sigma$ fixes a curve $C$ of genus $g>1$, then $f\not=\sigma^*(f)$ since otherwise the generic 
fiber would contain at most $2$ fixed points by $\sigma$. Moreover in this case $f\cdot \sigma^*(f)\geq 4$ since each fiber contains at least $3$ fixed points and the intersection $f\cdot \sigma^*(f)$ is even by Lemma \ref{even1}. This implies $g\leq 2$ by Lemma \ref{ineq}.

If $L$ is not a root lattice, then the Mordell-Weil group of $\pi$ is not trivial, so that $\pi$ has at least two sections $E,E'\subset \Fix(\sigma^2)$.
Thus $C$ intersects each fiber in two points. In this case, if $f\not=\sigma^*(f)$, then $g\leq 2$ by  Lemma \ref{ineq}. 
If $\sigma$ fixes $C$, then either $\pi$ is $\sigma$-invariant or $f\cdot \sigma^*(f)\geq 2$.
In the first case, since $\sigma$ has two fixed points in each fiber, $\sigma(E)=E'$, so that $a\geq 1$.
In the second case Lemma \ref{ineq} implies, as before, that $g\leq 2$. 
\end{proof}
 
\begin{figure}[ht!]
\begin{center}
\includegraphics[scale=0.29]{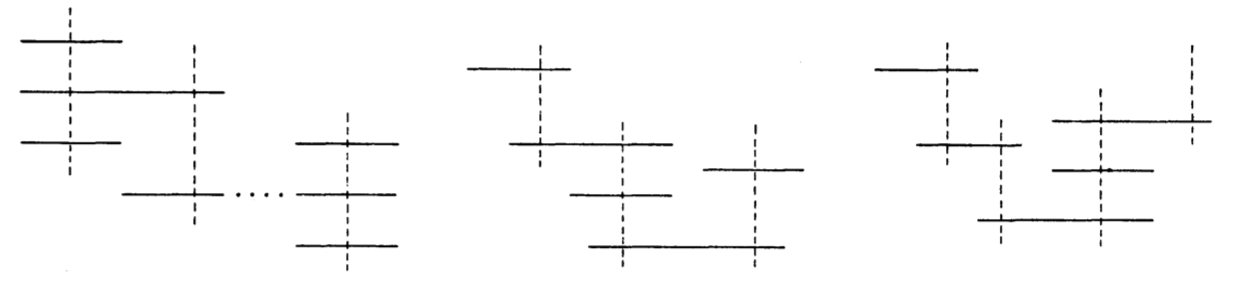}
\end{center}
\caption{Action of $\sigma^2$ on reducible fibers of types ${\rm I^*}_{\!\!4n}, {\rm III^*, II^*}$}
\label{action}
\end{figure}
\begin{remark}\label{split}
By \cite[Corollary 1.13.5]{nikulinintegral} an even indefinite lattice $S$ is isomorphic to a direct orthogonal sum $U\oplus L$, if $\rk(S)\geq 3+\ell(S)$, where $\ell(S)$ is the minimal numbers of generators of the discriminant group of $S$. In case $S=S(\sigma^2)$ this condition is equivalent to ask that the number $j$ of smooth rational curves fixed by $\sigma^2$ is at least $2$ by Theorem \ref{inv}.
\end{remark}

\section{$\Fix(\sigma)$ contains an elliptic curve}
We now assume that $\sigma$ fixes an elliptic curve $C$. In this case the K3 surface $X$ has an elliptic fibration $\pi_C:X\map \IP^1$ having $C$ as a smooth fiber. Observe that  all curves fixed by $\sigma^2$, since they are disjoint from $C$, are contained in the fibers of $\pi_C$. In particular $\alpha\geq 0$.
We will now classify the reducible fibers of $\pi_C$. 

\begin{theorem}\label{g1}
Let $\sigma$ be a purely non-symplectic order four automorphism on a K3 surface $X$ with 
$\Pic(X)=S(\sigma^2)$ and $\pi_C:X\map \IP^1$ be an elliptic fibration with a smooth fiber $C\subset \Fix(\sigma)$.
Then $\sigma$ preserves $\pi_C$ and acts on its base as an order four automorphism with two fixed points corresponding to the  fiber $C$ and a fiber $C'$  which is either smooth, of Kodaira type ${\rm I}_{4M}$ or ${\rm IV^*}$. The corresponding invariants of $\sigma$ are given in Table \ref{g=1}. 

 
\begin{table}[ht] 
$$ 
\begin{array}{ccc|ccc|c}
m & r & l &  n &k & a & \mbox{type of } C'  \\
\hline
6& 6 & 4 & 4 & 0 & 0  & {\rm I_0}\\
\hline
5 & 7 & 5 & 4 & 0 & 0  & {\rm I_4} \\
\hline
 4 & 10 & 4 &  6 & 1 & 0 & {\rm IV^*}   \\
 4 & 8 & 6 &   4&0 &   1 & {\rm I}_8 \ or\ {\rm IV^*}     \\
  \hline
 3 & 9 & 7 &   4& 0 & 2 & {\rm I_{12}}  \\
   \hline  
   2   & 10 & 8 &  4 & 0 &  3 & {\rm I_{16}} \\
         \end{array}$$
\vspace{0.2cm}
\caption{The case $g=1$}\label{g=1}
\end{table}
\end{theorem}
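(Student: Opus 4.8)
The plan is to use the elliptic fibration $\pi_C$ and the automorphism it induces on the base. Since $C$ is a fibre fixed pointwise by $\sigma$ we have $\sigma^*[C]=[C]$, so $\sigma$ acts on the pencil $|C|$ and induces $\bar\sigma\in\mathrm{Aut}(\IP^1)$ with $\pi_C\circ\sigma=\bar\sigma\circ\pi_C$ and $\bar\sigma^4=\id$. At a point $x\in C$ the local form of $\sigma$ is $A_{4,0}=\mathrm{diag}(i,1)$, with the eigenvalue $1$ direction tangent to $C$; since $C$ is a smooth fibre, $d\pi_C$ carries the eigenvalue $i$ direction isomorphically onto $T_{p_0}\IP^1$ (where $p_0=\pi_C(C)$), so $\bar\sigma$ has multiplier $i$ at $p_0$. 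Hence the order of $\bar\sigma$ is divisible by $4$ and therefore equals $4$, so $\bar\sigma$ has exactly two fixed points $p_0,p_1$; put $C'=\pi_C^{-1}(p_1)$. As $\Fix(\bar\sigma)\subseteq\Fix(\bar\sigma^2)$ and the nontrivial involution $\bar\sigma^2$ of $\IP^1$ has exactly two fixed points, $\Fix(\bar\sigma^2)=\{p_0,p_1\}$; thus $C$ and $C'$ are the only fibres invariant under $\sigma$ or $\sigma^2$, and $\Fix(\sigma)\subseteq\Fix(\sigma^2)\subseteq C\cup C'$. In particular every curve fixed by $\sigma$ or $\sigma^2$ other than $C$ is a component of $C'$, hence rational, so $\alpha$ equals the number $k$ of rational curves in $\Fix(\sigma)$ and $n=2k+4$ by Proposition~\ref{rel4}.

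The heart of the proof is identifying the Kodaira type of $C'$. Every $\bar\sigma$-orbit in $\IP^1\setminus\{p_0,p_1\}$ has four elements (no point has a $\IZ/2$-stabiliser, since $\Fix(\bar\sigma^2)=\{p_0,p_1\}$), so the singular fibres other than $C'$ come in quadruples of isomorphic fibres and $e(C')\equiv e(X)=24\equiv 0\pmod 4$. From Kodaira's list, $4\mid e(C')$ forces $C'$ to be smooth or of type $I_{4M}$, $IV$, $IV^*$, or $I^*_{4k+2}$. To eliminate $IV$, the fibres $I^*_{4k+2}$, and the $I_{4M}$ with $M\ge 5$, I would combine: the equality $\det d\sigma_p=i$ at each fixed point $p\in C'$ (from $\sigma^*\omega_X=i\,\omega_X$), which determines the eigenvalue of $\sigma$ along $C'$ once the normal eigenvalue $-i$—the multiplier of $\bar\sigma$ at $p_1$—is taken into account; the fact that $\sigma^2$ is a non-symplectic involution, hence has no isolated fixed point and acts with local form $\mathrm{diag}(1,-1)$ near its fixed curves; and $n\ge 4$. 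For instance an order-four rotation of a long cycle $I_{4M}$ with $M\ge 5$ is fixed point free, forcing $n=0$; and the symmetry group of the $IV$-configuration of three concurrent lines—and of the dual graphs of the $I^*_{4k+2}$—contains no order-four element compatible with all of these. This finite case analysis is the main technical obstacle; it leaves exactly $C'$ smooth, $C'$ of type $I_{4M}$ with $1\le M\le 4$, or $C'$ of type $IV^*$.

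Finally I would compute the invariants. For each admissible type, write down the action of $\sigma$ on $C'$—an automorphism of order $\le 4$ of the dual graph together with the automorphism it induces on each $\IP^1$-component, pinned down by $\det d\sigma_p=i$—and read off $n$ (the isolated fixed points of $\sigma$ on $C'$; these include the four fixed points of $\sigma|_{C'}$ when $C'$ is smooth), $k$ (the rational components of $C'$ fixed pointwise by $\sigma$), and $a$ (the pairs of rational components of $C'$ exchanged by $\sigma$ but fixed by $\sigma^2$). Proposition~\ref{rel4} with $\alpha=k$ then gives $n=2k+4$ and $k=\tfrac{r-l-2}{4}=\tfrac{10-l-m}{2}$, which with $r+l+2m=22$ leaves $(m,r,l)$ depending on one further invariant: the rank $r+l$ of $S(\sigma^2)=\Pic(X)$. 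This rank is recovered from the shape of $\Fix(\sigma^2)$—two elliptic curves, versus $C$ together with the $j$ rational components of $C'$ it meets—through Proposition~\ref{lat}, using that $S(\sigma^2)$ is $2$-elementary with $2j=r+l-d$. Matching each Kodaira type of $C'$ (whose Euler number is $24-4s$, with $4s$ the total Euler number of the remaining singular fibres) against the resulting triple $(m,r,l)$ reproduces Table~\ref{g=1}.
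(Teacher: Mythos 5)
Your reduction of the Kodaira type of $C'$ via the mod $4$ Euler number (all other singular fibres come in quadruples, so $e(C')\equiv 0 \pmod 4$) is a legitimate variant of the paper's first step, and your local-eigenvalue argument for the order-four action on the base matches the paper's. But the final step fails: ``matching each Kodaira type against the resulting triple $(m,r,l)$'' does \emph{not} reproduce Table \ref{g=1}, because the numerical relations you invoke (Proposition \ref{rel4}, Proposition \ref{lat}, $2$-elementarity, Riemann--Hurwitz) are also satisfied by configurations that must be excluded geometrically. Concretely, if $C'$ is of type $I_{12}$ with all components $\sigma$-invariant ($a=0$) one gets $(m,r,l)=(3,13,3)$, $(n,k)=(8,2)$, with $S(\sigma^2)\cong U\oplus E_8\oplus D_4\oplus A_1^{\oplus 2}$; if $C'$ is of type $I_{16}$ with $a=0$ one gets $(m,r,l)=(2,16,2)$, $(n,k)=(10,3)$, with an admissible $2$-elementary lattice of rank $18$ and determinant $4$. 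Both pass every test in your proposal, yet neither appears in the table. Ruling them out is precisely the hard core of the paper's proof: for $I_{16}$ (and likewise $I_{20}$) the classes of the $\sigma$-invariant components span a parabolic (degenerate) sublattice of finite index in $S(\sigma)$, contradicting the hyperbolicity of $S(\sigma)$ from Proposition \ref{lat}; for $I_{12}$ the paper needs Theorem \ref{jac} to produce an auxiliary jacobian fibration with fibres $II^*+I_0^*+2\tilde A_1$, and then a rank-$14$ sublattice of $S(\sigma)$ with nonvanishing determinant, contradicting $r=13$. None of these ideas appears in your plan, so the table you would obtain has extra rows.

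A related smaller gap: your elimination of $I_{4M}$ for $M\geq 5$ assumes $\sigma$ acts as an order-four rotation of the cycle, but when all components are preserved (the $a=0$ branch above) that argument says nothing; $I_{20}$ must instead be excluded by the rank count ($20$ independent component classes against $r=19$) or by nonexistence of the required $2$-elementary lattice. The exclusions of $IV$ and $I^*_{4k+2}$ are only sketched, though the ingredients you name (no isolated fixed points of the involution $\sigma^2$, disjointness of its fixed curves, $n\geq 4$) are essentially the ones the paper uses and that part could be completed along your lines. The missing lattice-theoretic exclusions, however, are a genuine gap, not a routine verification.
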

\begin{proof} We first observe that $\sigma^2$ is not the identity on the base of $\pi_C$, since otherwise  
it would act as the identity on the tangent space at a point of $C$, contradicting the fact that $\sigma^2$ is non-symplectic. Hence $\sigma$ has order four on $\IP^1$ and has two fixed points, corresponding to $C$ and another fiber $C'$.
If  $C'$  is irreducible, then $\alpha=0$ and $n=4$ by Proposition \ref{rel4}, which implies that $C'$ is smooth elliptic and  $\sigma$ has order two on it.
 
We now assume that $C'$ is reducible and we classify the possible Kodaira types for it (see also \cite[\S 4.2]{nikulinfactor}).
Since $n\geq 4$ by Proposition \ref{rel4}, then $C'$ contains at least two (disjoint) smooth rational curves fixed by $\sigma^2$. This excludes the Kodaira types ${\rm  I_2, I_3, III, IV}$ for $C'$.
Since $S(\sigma^2)=\Pic(X)$, then any smooth rational curve is invariant for $\sigma^2$.
Moreover, observe that if a component of $C'$  is ``external'', i.e. it only intersects one other component, then it is fixed by $\sigma^2$. 

If $C'$ is of type ${\rm I}_N^*$, then the four external components of $C'$ are fixed by $\sigma^2$ by the previous remark and the same holds for the multiplicity two components intersecting them, since they contain at least $3$ fixed points. This gives a contradiction since the fixed curves of $\sigma^2$ do not intersect. The case when $C'$ is either of type ${\rm II^*}$ or ${\rm III^*}$ can be excluded by a similar argument.




If $C'$ is of type ${\rm IV^*}$, then the central component of multiplicity $3$ is clearly invariant for $\sigma$.
If  the central component is fixed by $\sigma$, then $k=1$, $a=0$ and $n=6$  by Proposition \ref{rel4}.
Otherwise two branches of the fiber are exchanged, and the same Proposition gives $k=0$, $n=4$, $a=1$. 
 
 Finally we assume that $C'$ is of type ${\rm I}_N$, $N\geq 4$. By the previous remark $C'$ contains at least two components fixed by $\sigma^2$, this implies that all components are preserved by $\sigma^2$ and a component which is not fixed intersects two fixed ones. Moreover, it follows from Proposition \ref{rel4} that the number of components of $C'$ in $\Fix(\sigma^2)$ is even, since it equals 
$j=k+ n/2+2a=2+2\alpha+2a.$ Thus  $C'$ is of type ${\rm I}_{4M}$ for some positive integer $M$.

If $a=0$, then $n=4M-2k$. Since $n=2k+4$ by Proposition \ref{rel4}, this gives $k=M-1$ and $n=2M+2$. On the other hand by applying Lemma \ref{jimmy} we obtain for $M\geq 2$ that $k=4M/4=M$, which is clearly a contradiction. So if $a=0$ then $M=1$. 

If $a=M-1$, $M\geq 2$, then $\sigma$ acts as an order two symmetry on the set of components of $C'$, so that $k=0$ and $n=4$.
Observe that $\sigma$ can not act as a rotation of ${\rm I}_{4M}$ because otherwise $n=0$, contradicting the fact that $n\geq 4$.

If $M=1$ we have a fiber ${\rm I}_4$. Since $n\geq 4$ and the fixed points are contained in the fiber $C'$
the only possibility is that $k=0$ and $n=4$. Observe that the case with $a=1$ is not possible,
otherwise the fiber $C'$ would contain two isolated fixed points for $\sigma^2$ which is not possible, so $a=0$. 
As before one remarks that $\sigma$ can not act as a rotation of ${\rm I}_4$
since otherwise $n=0$ contradicting the fact that $n\geq 4$. 
\end{proof}

\begin{example}\label{wei}
We now assume that the fibration $\pi_C:X\map \IP^1$ in Theorem \ref{g1} has a $\sigma$-invariant section. Then a Weierstrass equation for the fibration is of  the following form:
$$
y^2=x^3+\alpha(t)x+\beta(t),
$$  
where $\alpha(t)=ft^8+at^4+b$, $\beta(t)=gt^{12}+ct^8+dt^4+e$ and  
$$\sigma(x,y,t)=(x,y,it).$$ 
The fibers preserved by $\sigma$ are over $0, \infty$ and the action at infinity is 
$$(x/t^4,y/t^6,1/t)\mapsto (x/t^4,-y/t^6,-i/t).$$ 
The discriminant polynomial of $\pi_C$ is: 
$$\Delta(t):=4\alpha(t)^3+27 \beta(t)^2=g_1t^{24}+g_2t^{20}+g_3t^{16}+g_4t^{12}+O(t^8),$$
where 
$$
g_1=4f^3+27g^2,\quad g_2=12f^2a+54gc,\quad g_3=12f^2b+54gd+12fa^2+27c^2,
$$
$$
g_4=24fab+4a^3+54ge+54cd
.$$
For a generic choice of the coefficients of $\alpha(t)$ and $\beta(t)$ the fibration has 24 fibers of type ${\rm I}_1$ over the zeros of $\Delta(t)$, $\sigma$ fixes pointwisely the fiber over $0$ and it acts as an involution on the fiber over $\infty$ (both fibers are smooth). 
If $g_1=0$ the fibration acquires a fiber of type ${\rm I}_4$ at $\infty$ by a generic choice of the parameters, 
if $g_1=g_2=g_3=0$ we generically get a fiber of type ${\rm I}_{12}$ and for $g_1=g_2=g_3=g_4=0$ we get a fiber of type ${\rm I}_{16}$. 
 If $g_1=g_2=0$ one gets two possible solutions: if $f=g=0$ the fibration acquires a fiber of type 
${\rm IV^*}$, otherwise it gets a fiber of type ${\rm I}_8$. 
\end{example}
More examples for the case $g=1$ will be given in Examples \ref{quartic} and \ref{hyp}.
 
\section{$\Fix(\sigma)$ contains a curve of genus $>1$}
We now assume that $\Fix(\sigma)$ contains a curve $C$ of genus $g>1$. 
By Proposition \ref{rel4} we have
$$\Fix(\sigma^2)=C\cup (E_1\cup\cdots\cup E_k)\cup (F_1\cup F'_1 \cup\cdots \cup F_a\cup F'_a)\cup (G_1\cup \cdots\cup G_{n/2}),$$
$$\Fix(\sigma)=C\cup E_1\cup\cdots\cup E_k\cup\{p_1,\dots,p_n\},$$
where $E_i, F_i, G_i$ are smooth rational curves such that
$\sigma(F_i)=F'_i,\ \sigma(G_i)=G_i$ and each $G_i$ contains exactly two isolated fixed points of $\sigma$.

\begin{lemma}\label{rel4a}
$k\leq r+m-8$,\ $l-m=2a$.
\end{lemma}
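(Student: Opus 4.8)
The plan is to compute the two quantities $l-m$ and an upper bound for $k$ by combining the topological Lefschetz fixed-point formula for $\sigma^2$ with the Hodge-theoretic/lattice relations already established. Recall from Proposition \ref{rel4} and Proposition \ref{lat} that $r=\rank S(\sigma)$, $r+l=\rank S(\sigma^2)$, $2m=\rank T(\sigma^2)$, and that $\Fix(\sigma^2)$ decomposes as in the display preceding the lemma: a curve $C$ of genus $g>1$, the $k$ rational curves $E_i$ fixed by $\sigma$, the $2a$ rational curves $F_i,F_i'$ swapped by $\sigma$, and the $n/2$ rational curves $G_i$ each carrying two isolated $\sigma$-fixed points.

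First I would establish $l-m=2a$. Apply the topological Lefschetz formula to the non-symplectic involution $\tau=\sigma^2$: since $\tau^*$ acts as $+1$ on $S(\sigma^2)\otimes\IR$ (rank $r+l$) and as $-1$ on $T(\sigma^2)\otimes\IR$ (rank $2m$), one gets $\chi(\Fix(\sigma^2))=2+(r+l)-2m$. On the other hand $\Fix(\sigma^2)$ is a disjoint union of smooth curves, so its Euler characteristic is $\chi(\Fix(\sigma^2))=(2-2g)+2k+4a+n$ (each rational curve contributing $2$, each of the $2a$ curves $F_i,F_i'$ also $2$, and $C$ contributing $2-2g$). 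Meanwhile Proposition \ref{rel4} (applied with $\alpha=(1-g)+k$, since the $E_i$ are the only fixed rational curves of $\sigma$) gives $n=2\alpha+4=2(1-g)+2k+4$ and $r-l=2+2\alpha=2+2(1-g)+2k$; together with $r+l=22-2m$ these yield $l$ and $r$ in terms of $g,k,m$. Substituting the expression for $\chi$ and the value of $n$, and using $r-l-2=4\alpha$ together with the two forms of $\alpha$ in Proposition \ref{rel4} (in particular $\alpha=(10-l-m)/2$), the identity $l-m=2a$ should fall out after a short algebraic manipulation; the point is that the "extra" $4a$ in $\chi(\Fix(\sigma^2))$ coming from the swapped pairs is exactly what distinguishes the $\sigma^2$-count from the $\sigma$-count, and it gets traded against $l-m$.

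Next I would bound $k$. The classes of the $k+1$ curves $C,E_1,\dots,E_k$ are linearly independent in $\Pic(X)$ (they are disjoint irreducible curves, and $C$ is not rational so no relation can hold — one can see independence from the intersection form, which is diagonal $\mathrm{diag}(2g-2,-2,\dots,-2)$ on their span). All of these curves lie in $\Fix(\sigma)$, hence their classes lie in $S(\sigma)$; but being disjoint rational/positive-genus curves they span a lattice that is \emph{not} the whole of the hyperbolic lattice $S(\sigma)$ — more precisely, the span of the $E_i$ alone is negative definite of rank $k$, contained in $S(\sigma)$ together with the hyperbolic part coming from $C$ and (say) a fiber class of $\pi_C$. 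Counting: inside $S(\sigma)$ we have a hyperbolic plane $U$ (from $C$ and a section or a complementary class) plus the rank-$k$ negative definite lattice spanned by the $E_i$, so $k+2\le r$, i.e. $k\le r-2$. To sharpen $r-2$ to $r+m-8$ I would instead use Proposition \ref{lat}: when $\Fix(\sigma^2)$ is not two elliptic curves, $2j=r+l-d$ and $2g=22-r-l-d$ where $j$ is the number of rational curves fixed by $\sigma^2$, here $j=k+a+n/2$ (counting $E_i$, the pairs contribute... — actually $j$ counts all rational curves in $\Fix(\sigma^2)$, i.e. $j=k+2a+n/2$). Combining $2g=22-(r+l)-d$, $2j=(r+l)-d$, $r+l=22-2m$, and the already-derived $n=2(1-g)+2k+4$, $l-m=2a$, one solves for $k$ in terms of $r,m,g,j$ and then uses $g\ge 2$ to get the inequality $k\le r+m-8$.

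The main obstacle I anticipate is bookkeeping: getting the decomposition of $j$ (the number of $\sigma^2$-fixed rational curves) correct — whether the swapped pair $F_i\cup F_i'$ counts as one curve of genus $0$ or as two — and tracking the parity/definiteness of the various sublattices so that the inequality comes out with the sharp constant $8$ rather than something weaker. I would resolve this by pinning down $j=k+2a+n/2$ from the explicit description of $\Fix(\sigma^2)$ above, then doing the linear algebra over $\IQ$ first (to get the identity $l-m=2a$ and the expression for $k$) before worrying about integrality; the inequality $g\ge 2$ is where the slack "$\le$" enters, and equality corresponds to $g=2$.
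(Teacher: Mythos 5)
Your derivation of $l-m=2a$ is correct and is essentially the paper's own argument: the paper computes the difference $\chi(\Fix(\sigma^2))-\chi(\Fix(\sigma))=4a$ directly and equates it with $2l-2m$ obtained from the topological Lefschetz formulas, while you expand $\chi(\Fix(\sigma^2))=2+(r+l)-2m$ and substitute $n=2\alpha+4$, $\alpha=(1-g)+k=(10-l-m)/2$; this is the same computation and it does close (your line ``$r-l=2+2\alpha$'' should read $r-l=2+4\alpha$, but you later quote the correct form $r-l-2=4\alpha$ and in fact do not need it).

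The inequality $k\le r+m-8$ is not reached by your plan, and this is a genuine gap. All the relations you propose to combine --- $2g=22-(r+l)-d$, $2j=(r+l)-d$, $r+l=22-2m$, $n=2(1-g)+2k+4$, $l-m=2a$, with $j=k+2a+n/2$ --- are equalities, and eliminating $d$ and $j$ yields only the identity $k=g+4-m-a$, which is already a consequence of Proposition \ref{rel4} together with $l-m=2a$ and contains no new information. Given that identity and $r=22-3m-2a$, the desired inequality is equivalent to $g+m+a\le 10$, i.e.\ to $2g\le k+6$: it is an \emph{upper} bound on $g$, so the hypothesis $g\ge 2$ points in the wrong direction and cannot produce it. Your fallback estimate $k\le r-2$ does not suffice either: it is weaker than $k\le r+m-8$ whenever $m<6$ (precisely the range needed to eliminate cases such as $k=0$, $g=4$ in Theorem \ref{propocurve}), and its justification via ``a hyperbolic plane from $C$ and a section or a fiber class of $\pi_C$'' presupposes a fibration that does not exist here, since $C$ has genus $>1$. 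The missing idea is the paper's one-line lattice argument: the pieces $C$, $E_i$, $F_i\cup F_i'$, $G_i$ of $\Fix(\sigma^2)$ are $\sigma$-invariant, pairwise disjoint, and each has nonzero self-intersection, so their classes are $1+k+a+n/2$ linearly independent elements of $S(\sigma)$, whence $r\ge 1+k+a+n/2$; since $n/2=7-m-a$ (from $n=2\alpha+4$, $\alpha=(10-l-m)/2$ and $l-m=2a$), this gives exactly $k\le r+m-8$.
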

\begin{proof}
The curves $C,\ E_i,\ F_i\cup F'_i,\ G_i$ are $\sigma$-invariant and are orthogonal to each other, thus  their classes in $\Pic(X)$ give independent elements in $S(\sigma)$.
Then $r=\rank (S(\sigma))\geq 1+k+a+n/2$ and this gives the inequality.
An easy computation shows that 
$$\mathcal X(\Fix(\sigma^2))-\mathcal X(\Fix(\sigma))=4a.$$
On the other hand, by Proposition \ref{rel4} and topological Lefschetz fixed point formula applied to $\sigma^2$:
$$\mathcal X(\Fix(\sigma))=24-2m-2l,\ \mathcal X(\Fix(\sigma^2))=24-4m.$$
Comparing these equalities, we obtain the statement.
\end{proof}

\begin{theorem}\label{propocurve}
Let $X$ be a K3 surface and $\sigma$ be a purely non-symplectic automorphism of order four on it such that $\Pic(X)=S(\sigma^2)$.
If $\Fix(\sigma)$ contains a curve of genus $g> 1$ then the invariants associated to $\sigma$ are as in Table \ref{g>1}. All cases in the Table do exist, see Example \ref{quartic} and Example \ref{hyp}.

\begin{table}[ht] 
$$ 
\begin{array}{ccc|ccccccl}
m & r & l &  n &k & a & g \\
\hline
7 & 1 & 7 & 0  &0 & 0 & 3 \\
\hline
6 & 4 & 6 &  2& 0&  0 & 2  \\   
 & 2 & 8 &  0& 0 & 1 & 3 \\
\hline
 5 & 5 & 7 &  2& 0 & 1 & 2 \\
  \hline
 4 & 6 & 8 &  2& 0& 2 & 2 \\
  \end{array}
$$
\vspace{0.3cm}

\caption{The case $g>1$}\label{g>1}
\end{table}

\end{theorem}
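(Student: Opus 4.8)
The plan is to combine the numerics of Section~1 with the geometry of the linear system $|C|$ (or of a $\sigma$-invariant elliptic fibration, when $\rk S(\sigma)\geq 5$), distinguishing the cases $g=2$, $g=3$ and $g\geq 4$. From the decomposition of $\Fix(\sigma)$ recalled above, Proposition~\ref{rel4} gives $\alpha=(1-g)+k$, hence $n=2\alpha+4=6-2g+2k$ and $l+m=10-2\alpha=8+2g-2k$; together with $l-m=2a$ (Lemma~\ref{rel4a}) and $r+l+2m=22$ this yields
\[
m=g+4-k-a,\qquad l=g+4-k+a,\qquad r=10-3g+3k+a .
\]
So it is enough to show that $k=0$ and $(g,a)\in\{(2,0),(2,1),(2,2),(3,0),(3,1)\}$; in particular $g\leq 3$.

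Next I would set up the projective model. The curve $C$ is smooth, irreducible and nef with $C^2=2g-2>0$, $h^0(\calO_X(C))=g+1$, and $|C|$ is base-point free (an $E$ with $E^2=0$, $E\cdot C=1$ would make $C$ a rational section). Since $\calO_X(C)|_C=\omega_C$, the restriction of $\varphi_{|C|}\colon X\to\IP^{g}$ to $C$ is the canonical map, so $\varphi_{|C|}(C)$ spans a hyperplane $H\cong\IP^{g-1}$; as $\sigma$ fixes $C$, hence $\varphi_{|C|}(C)$, pointwise, $\sigma$ acts on $\IP^{g}$ as $\mathrm{diag}(1,\dots,1,\zeta)$ with $\zeta^4=1$. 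One checks $\zeta\neq 1$, and $\zeta\neq-1$ unless $\varphi_{|C|}$ is $2{:}1$ with $\sigma^2$ its deck involution; the latter is impossible, since then $\Fix(\sigma^2)$ would map isomorphically onto the branch locus, contradicting that $\varphi_{|C|}|_C$ is the $2{:}1$ canonical map of the non-rational curve $C$ (by Saint-Donat a degree-$2$ map $\varphi_{|C|}$ forces $C$ to be hyperelliptic). Hence $\zeta=\pm i$, $\bar\sigma^{\,2}=\mathrm{diag}(1,\dots,1,-1)$ has fixed locus $H\sqcup\{p\}$ with $p=[0:\dots:0:1]$, and since $\varphi_{|C|}^{-1}(H)=C$ set-theoretically (that divisor is $\sim C$ and contains the irreducible curve $C$), every curve contracted by $\varphi_{|C|}$ — in particular each $E_i$ and each $F_i,F'_i$, all of which meet $C$ trivially — is sent to the single point $p$.

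Now I argue by the value of $g$. If $|C|$ is very ample then $X\subset\IP^{g}$ and $\Fix(\sigma)=(X\cap H)\sqcup(X\cap\{p\})=C\sqcup(\text{at most one point})$, so $k=0$, $n=0$, and $g=3$ by $n=6-2g$: this is row~$1$. Otherwise $\varphi_{|C|}$ is birational onto a K3 with rational double points, or $2{:}1$ onto a surface of minimal degree $g-1$. In the birational, $g=3$ case the image is a quartic; writing it as $F=x_3^2Q_2+x_3C_3+Q_4$ in $\sigma$-adapted coordinates, $\sigma$-semiinvariance of $F$ forces $Q_2=C_3=0$, i.e. the quartic reducible — a contradiction, so no curve is contracted and $k=a=0$, row~$1$ again. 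For $g=2$, where $\varphi_{|C|}$ is the double plane of a plane sextic, when $r=\rk S(\sigma)\geq 5$ it is cleaner to bypass the sextic: Proposition~\ref{ellinv} gives a $\sigma$-invariant elliptic fibration on whose base $\sigma$ acts trivially (because $\pi|_C$ is non-constant and $\sigma|_C=\mathrm{id}$), so every fibre is $\sigma$-invariant, the general fibre carries an order-$4$ automorphism with its two fixed points on $C$, and hence $C=E_\sigma$ is an irreducible bisection; Corollary~\ref{cor} then yields $(n,k,a)\in\{(2,0,1),(2,0,2)\}$, rows $4$ and $5$, while $r<5$ forces $k=a=0$ (row~$2$) since $r=4+3k+a$.

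The remaining cases — $g=3$ with $\varphi_{|C|}$ of degree $2$, and $g\geq 4$ — are where the main obstacle lies: one must analyse the (necessarily ADE) singularity of the image of $\varphi_{|C|}$ at $p$ and the action of $\bar\sigma$ near $p$, since by the above all contracted curves lie over $p$; this is what simultaneously excludes $g\geq 4$ and, for $g=3$, the low-rank triples $(k,a)\in\{(0,2),(0,3),(1,0)\}$, leaving rows $1$ and $3$. Everything else is bookkeeping with the displayed numerical relations and the tables of Corollary~\ref{cor}.
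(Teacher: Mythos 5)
Your numerical reduction is correct (it reproduces the paper's use of Proposition \ref{rel4} and Lemma \ref{rel4a}), and the parts you actually carry out are sound and take a genuinely different route from the paper: the $g=2$ case via a $\sigma$-invariant fibration and Corollary \ref{cor} for $r\geq 5$ (plus $r=4+3k+a<5$ forcing row 2), and the $g=3$ birational case via semi-invariance of the quartic, which indeed forces $F=cx_3^4+f_4(x_0,x_1,x_2)$ and hence no contracted $\sigma^2$-fixed curve, are workable. The problem is that the proof stops exactly where the real difficulty lies. The residual possibilities allowed by the numerics, namely $(g,k,a)=(3,0,2),(3,0,3),(3,1,0)$ and $g=4,k=2,a=0$ (with $r=3,4,4,4$ respectively), are not excluded: you only assert that ``one must analyse the (necessarily ADE) singularity of the image at $p$ and the action of $\bar\sigma$ near $p$,'' which is a statement of intent, not an argument. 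Worse, in the three residual $g=3$ cases your own birational analysis shows there are no contracted fixed curves unless $\varphi_{|C|}$ has degree $2$; so there one is in the hyperelliptic situation, the image is a quadric (smooth or a cone) with no ADE point at $p$ created by the contracted curves, which instead lie over singularities of the branch divisor. The plan as stated therefore does not even apply to those cases without being redesigned around the branch curve, and nothing is said about how the $2{:}1$ geometry pins down $a=1$, excludes $a=2,3$ and $k=1$, or rules out $g=4$ with two disjoint fixed $(-2)$-curves and one $G$-curve all contracted to $p$.

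For comparison, these residual cases are precisely the content of the paper's proof: it computes $S(\sigma^2)$ in each of them from Proposition \ref{lat} and the classification of $2$-elementary lattices, applies Theorem \ref{jac} (a $\sigma^2$-invariant jacobian fibration together with the Hodge-index inequality of Proposition \ref{ineq}) to force $g\leq 2$ whenever $S(\sigma^2)\cong U\oplus R$, and disposes of the remaining possibility $(r,k,g,a)=(4,1,3,0)$ with $S(\sigma^2)\cong U(2)\oplus D_4^{\oplus 2}$ by an explicit rank-three sublattice of $S(\sigma)$ whose negative determinant contradicts hyperbolicity. Either import that machinery or actually carry out the branch-curve analysis for the hyperelliptic models (and a corresponding argument for $g=4$); as it stands, the proposal has a genuine gap covering most of the cases the theorem is really about.
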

\begin{proof}  
If $r=\rk S(\sigma)\geq 5$ then, by Proposition \ref{ellinv}, $X$ carries a $\sigma$-invariant elliptic fibration $\pi$. 
If $C\subset \Fix(\sigma)$ has genus $g>1$, then $C$ is transversal to the fibers of $\pi$, so that any fiber of $\pi$ is preserved by $\sigma$ and we are in the hypotheses of Corollary \ref{cor}. 
Thus $g=n=2$, $k=0$ and $a\in\{1,2\}$ by Corollary \ref{cor}  
(the case $g=3, n=k=0$, $a=1$ has $r=2$ and the case $g=n=2, k=a=0$ has $r=4$). 

We now assume that $r<5$.  
Since $l+2m\geq 18$, $l+m=10-2\alpha\leq 14$ by Proposition \ref{rel4} 
and $l-m=2a$ by Lemma \ref{rel4a},
then $4\leq m \leq 7$ and $a\leq 3$.

By Proposition \ref{rel4} and Lemma \ref{rel4a}
 the vector $(r,k,g,a)$ either appears in the rows of Table \ref{g>1} or is equal to one among 
 $(4,0,3,3)$, $(3,0,3,2), (4,1,3,0), (4,2,4,0)$.
 In any case we can compute $S(\sigma^2)$ (up to isomorphism) by means of the classification theorem of $2$-elementary lattices \cite[Theorem 4.3.1]{nikulinfactor}:
\begin{eqnarray*}
\begin{array}{c|c}
(r,k,g,a)&S(\sigma^2)\\
\hline
(4,0,3,3)&U\oplus E_8\oplus D_4\\
\hline
(3,0,3,2)& U\oplus D_8\oplus A_1^{\oplus 2}\\
\hline
(4,1,3,0)&U\oplus D_4\oplus A_1^{\oplus 4} \mbox{ or } U(2)\oplus D_4^{\oplus 2}\\
\hline
(4,2,4,0)&U\oplus D_4^{\oplus 2} \mbox{ or } U\oplus D_6\oplus A_1^{\oplus 2}.
\end{array}
\end{eqnarray*}
If $S(\sigma^2)\cong U\oplus R$, where $R$ is a direct sum of root lattices, then $g\leq 2$ by Theorem \ref{jac}, giving  a contradiction.
 In the case $(4,1,3,0)$ with $S(\sigma^2)\cong U(2)\oplus D_4^{\oplus 2}$ we still have an isomorphism $S(\sigma^2)\cong U\oplus R$ (where $R$ is not a sum of root lattices) by Remark \ref{split} 
 since $j=2$. By Theorem \ref{jac}, since $g>2$, there is a jacobian elliptic fibration $\pi$ whose fibers are invariant for $\sigma$ and with two sections $E,E'$ fixed by $\sigma^2$. Since $a=0$ these sections are fixed by $\sigma$, so that each fiber of $\pi$ contains four fixed points, giving a contradiction.
\end{proof}

\begin{example}[\textbf{plane quartics}]\label{quartic} This construction is due to Kond\=o \cite{K1}. Let $C$ be a smooth plane quartic, defined as the zero set of a homogeneous polynomial $f_4\in \IC[x_0,x_1,x_2]$ of degree four. The fourfold cover of $\IP^2$ branched along $C$ is a K3 surface with equation $$t^4=f_4(x_0,x_1,x_2).$$ 
The covering automorphism 
$$\sigma(x_0,x_1,x_2,t)= (x_0,x_1,x_2, it)$$ is a non-symplectic automorphism of order four whose fixed locus is the plane section $t=0$, which is isomorphic to the curve $C$. Thus we have $a=n=k=0$ and $g=3$.
In case $C$ has  nodes or cusps, 
then the fourfold cover $X$ of $\IP^2$ branched along $C$ has rational double points of type $A_3$ and $E_6$ at the inverse images of a node and of a cusp of $C$ respectively.
The minimal resolution $\tilde X$ of $X$ is a K3 surface and the covering automorphism of $X$ lifts to a non-symplectic automorphism of $\tilde X$. 
If $C$ has a node, then the central component of the exceptional divisor of type $A_3$ is fixed by $\sigma^2$ and contains two fixed points for $\sigma$.
If $C$ has a cusp, then the exceptional curve is of type $E_6$, $\sigma^2$ fixes its two simple components and $\sigma$ exchanges them.
Thus, if $C$ is irreducible with $x$ nodes and $y$ cusps, then the invariants of $\sigma$ are $g=3-x-y$, $a=y$, $n=2(x+y)$ and $k=0$. Taking $(x,y)=(0,0), (1,0)$ and $(0,1)$ we obtain examples for the first, second and fourth case in Table \ref{g>1}. 
If $(x,y)=(2,0), (1,1)$ or $(0,2)$ we obtain examples with $g=1$ corresponding to the cases in Table \ref{g=1} with a fiber $C'$ of type ${\rm I}_4, {\rm I}_8$ and ${\rm I}_{12}$ respectively.
If $C$ is the union of a cubic and a line we obtain the case  in Table \ref{g=1} with a fiber $C'$ of type ${\rm IV^*}$ and $(k,n,a)=(1,6,0)$.
In \cite[Proposition 1.7]{a} the lattice $S(\sigma^2)$ has been computed in case $C$ is irreducible and generic with $x$ nodes and $y$ cusps.
\end{example}
\begin{example}[\textbf{hyperelliptic genus three curves}]\label{hyp} Let $\mathbb F_4$ be a Hirzebruch surface and $e,f \in \Pic(\mathbb F_4)$ be the class of the rational curve $E$ with $E^2=-4$ and the class of a fiber respectively.
A smooth curve $C$ with class $2e+8f$ is a hyperelliptic genus three curve.
Let $Y$ be the double cover of  $\mathbb F_4$ branched along $C$ and $X$ be the double cover of $Y$ branched along $C\cup R_1\cup R_2$, where $R_1\cup R_2$ is the inverse image of the curve $E$.
The surface $X$ is a K3 surface (see \cite{K2} and \cite[\S 3]{a}) with a non-symplectic automorphism $\sigma$ of order $4$ whose fixed locus is the inverse image of the curve $C$ and such that $\sigma(R_1)=R_2$. Observe that $\sigma^2$ fixes $C\cup R_1\cup R_2$.
In this case we have $g=3$, $n=k=0$ and $a=1$.

As in the previous example, if $C$ has at most nodes and cusps, then the minimal resolution $\tilde X$ of $X$ is again a K3 surface with a non-symplectic automorphism of order $4$. If $C$ is irreducible with $x$ nodes and $y$ cusps, then the invariants of $\sigma$ are $g=3-x-y$, $a=y+1$, $n=2(x+y)$, $k=0$. 
Taking $C$ with a node and a cusp we obtain examples for the fourth and the last case respectively in Table \ref{g>1}.
If $(x,y)=(2,0), (1,1)$ or $(0,2)$ we obtain examples with $g=1$ corresponding to the cases in Table \ref{g=1} with a fiber $C'$ of type ${\rm I}_8$, ${\rm I}_{12}$ and ${\rm I}_{16}$ respectively.
In \cite[\S4.9]{K2} the lattice $S(\sigma^2)$ has been computed in case $C$ is irreducible and generic with $x$ nodes and $y$ cusps.
 \end{example}

 \section{$\Fix(\sigma^2)$ only contains rational curves}
 In this section we assume that the curves fixed by $\sigma^2$ are rational and that at least one of them is fixed by $\sigma$.  
 \begin{theorem}\label{rational}
Let $X$ be a K3 surface and $\sigma$ be a purely non-symplectic automorphism of order four on it.
If $\Fix(\sigma)$ contains a smooth rational curve and all curves fixed by $\sigma^2$ are rational, 
then the invariants associated to $\sigma$ are as in Table \ref{g=0}. All cases in the table do exist, see Example \ref{ex0}.\end{theorem}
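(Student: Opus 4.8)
The plan is to turn the Lefschetz relations of Proposition \ref{rel4} and the lattice data of Proposition \ref{lat} into a short finite list of numerical types, and then to discard the impossible ones by the lattice- and elliptic-fibration arguments already used in the proofs of Theorems \ref{g1} and \ref{propocurve}.

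First I would read off the combinatorics of the fixed loci. Since every curve fixed by $\sigma^2$ is rational, $\Fix(\sigma^2)$ is a disjoint union of smooth rational curves (the case of two elliptic curves in Proposition \ref{rel4} does not occur), and $\sigma$ permutes its components; each component is of exactly one of three kinds: pointwise fixed by $\sigma$; $\sigma$-invariant but not pointwise fixed, in which case $\sigma$ restricts to an involution of $\IP^1$ and the component carries exactly two isolated fixed points of $\sigma$; or interchanged with a second component, in which case (the two being disjoint, as $\Fix(\sigma^2)$ is smooth) it carries no fixed point of $\sigma$. Writing $k+1$ for the number of curves in $\Fix(\sigma)$ and $a$ for the number of interchanged pairs, Proposition \ref{rel4} gives $\alpha=k+1$, hence $n=2k+6$, $r-l=4k+6$ and $l+m=8-2k$, while the number of components of $\Fix(\sigma^2)$ is $(k+1)+n/2+2a=2k+4+2a$. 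On the other hand Proposition \ref{lat} with $g=0$ forces $\det S(\sigma^2)=2^{22-r-l}$ and makes that number of components equal to $(r+l)-10$. Comparing the two expressions and using $r+l+2m=22$ yields $r+l=22-2m$ together with the relation
\[ m+k+a=4. \]
Imposing $m\geq 1$, $k,a\geq 0$, $l=8-2k-m\geq 0$ and $\rank\Pic(X)=r+l=14+2k+2a\leq 20$ then leaves only finitely many tuples $(m,r,l,n,k,a)$, with $r=10+3k+a$.

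Next I would eliminate the tuples that cannot be realized. In each of them $r=\rank S(\sigma)=10+3k+a\geq 10$, so by Proposition \ref{ellinv} the surface carries a $\sigma$-invariant elliptic fibration $\pi:X\map\IP^1$. If some component of $\Fix(\sigma)$ dominates the base, then $\sigma$ is the identity on the base, every fiber is $\sigma$-invariant and meets $\Fix(\sigma)$, so Proposition \ref{ell} and Corollary \ref{cor} apply and reduce $(n,k,a)$ to the few rows there with $g_\sigma=g_{\sigma^2}=0$ (or $E_\sigma$, $E_{\sigma^2}$ reducible). In the remaining cases I would argue as in Theorems \ref{g1} and \ref{propocurve}: identify $S(\sigma^2)$ up to isometry from Nikulin's classification of $2$-elementary lattices \cite[Theorem 4.3.1]{nikulinfactor}, bring in, when $S(\sigma^2)\cong U\oplus R$, the jacobian fibration and $\sigma^2$-action of Theorem \ref{jac} and Figure \ref{action}, and then exhibit inside $S(\sigma)$ a sublattice spanned by fiber components, a section, and classes of the form $R_i\cup\sigma(R_i)$ whose Gram determinant has a sign incompatible with the hyperbolicity of $S(\sigma)$ (Proposition \ref{lat}). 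The tuples that survive are realized by the families of Examples \ref{quartic} and \ref{hyp} and their singular or reducible specializations — fourfold covers of plane quartics and double covers of $\mathbb F_4$ with suitable branch curve — so that Table \ref{g=0} is complete.

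The main difficulty is the elimination step, which is a genuine case-by-case analysis rather than one uniform argument. The delicate tuples are those with large $r$, where $\Fix(\sigma^2)$ contains many disjoint $\sigma$-invariant rational curves; there one must choose precisely the right auxiliary curves — typically a simple component of an $I_0^*$ or $\tilde{A}_1$ fiber not meeting the relevant section, together with its $\sigma$-translate — so that the determinant computation contradicts the hyperbolic signature of $S(\sigma)$. Pinning down the $2$-elementary lattice $S(\sigma^2)$ and reading off the corresponding jacobian fibration is the device that makes these computations possible.
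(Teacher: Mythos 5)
Your counting step is correct and is in substance the same as the paper's: your relation $m+k+a=4$ (equivalently $l-m=2a$, which you rederive from the component count of $\Fix(\sigma^2)$ and the $2$-elementary data of Proposition \ref{lat}) together with $n=2k+6$, $r-l=4k+6$, $l+m=8-2k$ is exactly what the paper extracts from Proposition \ref{rel4} and Lemma \ref{rel4a}. Enumerating its solutions gives the eight rows of Table \ref{g=0} \emph{plus two extra tuples}, namely $(m,r,l;n,k+1,a)=(1,17,3;10,3,1)$ and $(1,15,5;8,2,2)$, and the entire content of the theorem beyond the arithmetic is the exclusion of these two.

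This is where your proposal has a genuine gap: you never isolate these two cases, and the elimination scheme you sketch is not carried out and would not obviously succeed as stated. Your dichotomy (a fixed curve dominating the base of the fibration from Proposition \ref{ellinv}, hence Corollary \ref{cor}; otherwise a Gram-determinant contradiction in $S(\sigma)$ \`a la Theorems \ref{g1} and \ref{propocurve}) leaves the second branch as a plan only; moreover Theorem \ref{jac} needs $\Pic(X)=S(\sigma^2)\cong U\oplus R$, which is not a hypothesis of Theorem \ref{rational} (it does hold in the two critical cases, but only because $m=1$ forces $\rk\Pic(X)=20$ and $S(\sigma^2)=\Pic(X)$ — a point you do not make). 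The paper's actual argument is quite different and more specific: since $m=1$, the transcendental lattice is $\langle 2\rangle^{\oplus 2}$, so $X$ is the unique Vinberg K3 surface; by \cite[Lemma (1.5), (2)]{MO} it has a $\sigma$-invariant jacobian elliptic fibration, all of whose possibilities are known from Nishiyama's classification, and the fibration-by-fibration analysis of Example \ref{vinberg} (height formula, Mordell--Weil groups, possible actions of $\sigma$ on the base and on fiber components) shows that $a\in\{0,3\}$ (or $4$), never $1$ or $2$. Without either reproducing that analysis or giving a concrete substitute (e.g.\ an explicit sublattice of $S(\sigma)$ of rank $>r$ or of non-hyperbolic signature in each of the two cases), the exclusion of $(1,17,3;10,3,1)$ and $(1,15,5;8,2,2)$ — and hence the completeness of Table \ref{g=0} — remains unproved in your write-up.
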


\begin{table}[ht] 
$$ 
\begin{array}{ccc|ccccccl}
m & r & l &  n & k & a  \\
\hline
 4 & 10 & 4 &  6 & 1 & 0   \\
  \hline
 3 & 13 & 3 &  8 &2 & 0 \\
  & 11 & 5 &  6& 1 & 1 \\
   \hline  
     2 & 16 & 2 &  10 &3 & 0 \\
      & 14 & 4 &  8 &2 & 1 \\
       & 12 & 6 &  6& 1 & 2 \\
        \hline
          1 & 19 & 1 & 12 & 4 & 0  \\
      & 13 & 7 &  6 & 1 & 3  \\
 \end{array}
 $$
 \vspace{0.2cm}
 
 \caption{The case $g=0$}\label{g=0}
 \end{table}
 \begin{proof}
By Proposition \ref{rel4} and Lemma \ref{rel4a} we have that $l+m\leq 8$ and $l-m=2a$, 
thus the possible cases are those appearing in Table \ref{g=0} and $(r,k,a)=(17, 3,1), (15, 2,2)$.
In both cases $m=1$ and the surface $X$ is isomorphic to Vinberg's K3 surface. 
Moreover, by \cite[Lemma 1.5, (2)]{MO} or Remark \ref{split}, $X$ has a $\sigma$-invariant jacobian elliptic fibration,
since $\ell(S(\sigma))\leq \rk(S(\sigma)^{\perp})\leq 7\leq \rk S(\sigma) -3$.
By Example \ref{vinberg}, if $\sigma$ preserves an elliptic fibration on $X$, then $a\in \{0,3,4\}$. 
Thus the two cases do not appear.
  \end{proof}
 
 \begin{example}[\textbf{Vinberg's K3 surface}]\label{vinberg}
If $m=1$, then $S(\sigma^2)=\Pic(X)$ has maximal rank and $X$ is isomorphic to the unique K3 surface with 
 $$T(X)=T(\sigma^2)\cong \left( \begin{array}{cc} 2 & 0 \\
 0 & 2\end{array}\right),$$ 
 since it can be easily proved that, up to isometry, this is the only rank two, even, positive definite and $2$-elementary lattice. 
 The automorphism group of this K3 surface is known to be infinite and has  been computed by Vinberg in \cite[\S 2.4]{V}.
 In particular, it is known that $X$  is birationally isomorphic to the following quartics in $\IP^3$:
 $$x_0^4=-x_1x_2x_3(x_1+x_2+x_3),$$
  $$x_0^4=x_2^2x_3^2+x_3^2x_1^2+x_1^2x_2^2-2x_1x_2x_3(x_1+x_2+x_3),$$
 which are degree four covers of $\IP^2$ branched along the union of four lines in general position and an irreducible quartic with three cusps respectively.
The two covering automorphisms $x_0\mapsto ix_0$ induce non-conjugate order four non-symplectic automorphisms on $X$:
 the first one has $a=0$, $n=12$ and  fixes $4$ smooth rational curves (the proper transforms of the lines), the second one has $a=3$ (coming from the cusps), $n=6$ and fixes one smooth rational curve (the proper transform of the quartic). These give the last two examples in Table \ref{g=0}.
  
   All elliptic fibrations $\pi:X\map \IP^1$ are known to be jacobian by \cite[Theorem 2.3]{Ke} and have been classified by Nishiyama in \cite[Theorem 3.1]{Nishiyama}. We recall the classification in Table \ref{vinbergtable}, where $R$ is the lattice generated by the components of the reducible fibers not intersecting the zero section and $MW$ denotes the Mordell-Weil group of $\pi$.
 
 \begin{table}[ht] 
$$\begin{array}{cc|c|c}
\mbox{No.} & R &  MW & a\\
\hline
1) & E_8^{\oplus 2}\oplus A_1^{\oplus 2} & 0 & 0,4\\
2) & E_8\oplus D_{10} & 0 & 0\\
3)& D_{16}\oplus A_1^{\oplus 2}& \IZ/2\IZ & 0,3 \\
4)& E_7^{\oplus 2}\oplus D_4 &  \IZ/2\IZ & 0,3,4 \\
5)& E_7\oplus D_{10}\oplus A_1 &  \IZ/2\IZ & --\\
6)& A_{17}\oplus A_1 &   \IZ/3\IZ & 0\\
7)& D_{18} & 0 & 0\\
8)& D_{12}\oplus D_6 &  \IZ/2\IZ& 0,3\\
9)& D_8^{\oplus 2}\oplus A_1^{\oplus 2}& ( \IZ/2\IZ)^2 & 0,3,4 \\
10)& A_{15}\oplus A_3 &  \IZ/4\IZ & 0,3,4\\
11)& E_6\oplus A_{11} & \IZ\oplus  \IZ/3\IZ& 0,3\\
12)& D_6^{\oplus 3} &  ( \IZ/2\IZ)^2& 3\\
13)& A_9^{\oplus 2} &  \IZ/5\IZ& 0
\end{array}$$
\vspace{0.2cm}

\caption{Elliptic fibrations of Vinberg's K3 surface}\label{vinbergtable}
 \end{table}
 
In each case we will compute the possible values taken by $a$. 
 We will apply the height formula \cite[Theorem 8.6]{shioda} and the notation therein. Moreover, we will apply Theorem \ref{jac} to determine the number of fixed curves of $\sigma^2$ contained in the reducible fibers.\\ 
  1) 
  All curves fixed by $\sigma^2$ are contained in the reducible fibers.
  The automorphism $\sigma$ either preserves the ${\rm II^*}$ fibers or it exchanges them, giving 
  $a=0$ or $a=4$ respectively.\\
 2) 
 The reducible fibers contain exactly $8$ curves fixed by $\sigma^2$ and the two remaining fixed curves of $\sigma^2$ are transversal to the fibers, one of them is the unique section, the other is a $3$-section. This implies that both are $\sigma$-invariant and $\sigma$ preserves all components of ${\rm I}_6^*$, giving $a=0$.\\
 3)
 The reducible fibers contain exactly $7$ curves fixed by $\sigma^2$. The remaining fixed curves of $\sigma^2$ are transversal to the fibers and give two sections $s_0, s_1$ (assume $s_0$ to be the zero section) and a $2$-section. 
 The translation by the order two element in the Mordell-Weil group gives rise to a symplectic automorphism of order two. Since such automorphism has $8$ fixed points by \cite{Nikulin1}, then it  acts on  the fiber ${\rm I}_{14}^*$ as a reflection with respect to the central component, i.e. $s_0$ and $s_1$ intersect the fiber in simple components not meeting the same component.
 This implies that $\sigma$ acts on the components of ${\rm I}_{14}^*$ either as the identity  or as a reflection with respect to the central component, giving $a=0$ and $a=3$ respectively.\\
 4) 
 As before, the reducible fibers only contain $7$ curves fixed by $\sigma^2$ and the fibration has two sections $s_0, s_1$ and a $2$-section fixed by $\sigma^2$. 
 If $\sigma$ preserves each fiber, then either $s_0, s_1$ are fixed by $\sigma$ or they are exchanged giving $a=0$ and $a=3$ respectively by Corollary \ref{cor}.
Otherwise, if $\sigma$ has order two on the basis of $\pi$, then it exchanges the two fibers of type ${\rm III^*}$, so that either $a=3$ or $a=4$. \\
 5) Since the fibration has three reducible fibers of distinct types, then $\sigma$ acts as the identity on $\IP^1$. This is not possible by Proposition \ref{ell}, thus this fibration can not be $\sigma$-invariant.\\
6) The fiber of type ${\rm I}_{18}$ contains $9$ curves fixed by $\sigma^2$ and is clearly $\sigma$-invariant.
Since $18\not\equiv 0\ (\!\!\!\mod 4)$, then $a=0$.\\
7) The fiber of type ${\rm I}_{14}^*$ contains $8$ curves fixed by $\sigma^2$, the remaining two curves fixed by $\sigma^2$ give a section and a $3$-section. Thus $a=0$.\\
8)  The reducible fibers contain $7$ curves fixed by $\sigma^2$, the remaining fixed curves of $\sigma^2$ give two sections $s_0,s_1$.
By the height formula, $s_0$ and $s_1$ meet simple components intersecting distinct components in the ${\rm I}_{8}^*$ fiber and intersecting the same component in the ${\rm I}_2^*$ fiber. Thus either $a=0$ or $a=3$.\\
9) The reducible fibers contain $6$ curves fixed by $\sigma^2$ and the fibration has four sections $s_0,s_1, s_2, s_3$ fixed by $\sigma^2$. 
Observe that the elliptic fibration has two fibers of type ${\rm I}_4^*$,  two of type ${\rm I}_2$ and no other singular fibers (since the Euler characteristic of $X$ is $24$).
Let $\Theta_i^1$, $\Theta_i^2$, $i=0,1,2,3$ denote the ``external'' components of the ${\rm I}_4^*$ fibers and $\Theta_i^3$, $\Theta_i^4$, $i=0,1$, denote the components of the ${\rm I}_2$ fibers, see also the notation of \cite[Theorem 8.6, p. 229]{shioda}.
By the height formula, we can assume that $s_1$ intersects $\Theta_1^1, \Theta_2^2, \Theta_1^3, \Theta_1^4$, $s_2$ intersects $\Theta_2^1, \Theta_1^2, \Theta_1^3, \Theta_1^4$ and $s_3$ intersects $\Theta_3^1, \Theta_3^2, \Theta_0^3, \Theta_0^4$.
Observe that $\sigma$ either preserves the fibers of type ${\rm I}_4^*$ and exchanges the fibers of type ${\rm I}_2$, or it exchanges both pairs of reducible fibers, or it exchanges the fibers of type ${\rm I}_4^*$ and it preserves those of type ${\rm I}_2$.
In the first case $a=0,3$ or $4$, according to the action of $\sigma$ on the sections $s_i$'s.
In the second case $a=4$ (observe that the $4$ fixed points of $\sigma$ are contained in two smooth fibers).
The last case does not appear since otherwise $\sigma$ should preserve all components of the fibers ${\rm I}_2$, which gives a contradiction since the components intersecting $s_1, s_2$ contain no fixed points.\\
10) In this case all fixed curves by $\sigma^2$ are contained in the two reducible fibers, thus $\sigma$ has order $4$ on $\IP^1$. Observe that the fibration has four sections $s_0,s_1,s_2,s_3$ preserved by $\sigma^2$, so that each of them intersects the curves fixed by $\sigma^2$.
This remark and the height formula imply that we can assume that $s_1$ intersects $\Theta_8^1, \Theta_0^2$, $s_2$ intersects $\Theta_4^1, \Theta_2^2$ and $s_3$ intersects $\Theta_{12}^1, \Theta_2^2$ (again we use the notation of \cite[Theorem 8.6, p. 229]{shioda}).
This implies that $\sigma$  either preserves all components of the reducible fibers ($a=0$), or it acts
on the sections as a permutation $(s_2 s_3)$ or $(s_0 s_1)$ ($a=3$), or as a permutation of type $(s_0 s_1)(s_2 s_3)$ ($a=4$).\\
11) In this case all fixed curves by $\sigma^2$ are contained in the two reducible fibers, thus $\sigma$ has order $4$ on $\IP^1$.
By the height formula we can assume that $s_1$ intersects $\Theta_1^1, \Theta_4^2$ and $s_2$ intersects $\Theta_2^1, \Theta_8^2$.
This implies that $\sigma$  either preserves all components of reducible fibers ($a=0$), or it acts
on the sections as a transposition ($a=3$).\\
12) The reducible fibers contain $6$  curves fixed by $\sigma^2$, the remaining fixed curves give four sections $s_i, i=0,1,2,3$. Observe that $\sigma$ has order two on $\IP^1$, it exchanges two fibers of type ${\rm I}_2^*$ and it preserves the third one.
By the height formula we can assume that $s_1$ intersects $\Theta_1^1, \Theta_3^2, \Theta_3^3$, $s_2$ intersects $\Theta_3^1, \Theta_3^2, \Theta_1^3$ and $s_3$ intersects $\Theta_3^1, \Theta_1^2, \Theta_2^3$. This implies that $\sigma$ acts on the sections as a transposition, so that $a=3$.\\
13) All fixed curves by $\sigma^2$ are contained in the two reducible fibers and
$\sigma$ preserves the two fibers of type ${\rm I}_{10}$ (since otherwise $a=5$, which is  not possible), so that $a=0$. \end{example}
 \begin{example}\label{ex0}
 Let $\tilde X$ and $C$ be as in Example \ref{quartic}.
 Taking $C$ with 3 nodes or with 2 nodes and a cusp we obtain examples for the first and third case in Table \ref{g=0} respectively. If $C$ is the union of two conics (or the union of a line and a nodal cubic), the union of a conic and two lines or the union of a line and a cuspidal cubic we obtain examples for the second, fourth and fifth case in the table respectively.
 Finally, as mentioned in the previous example, if $C$ is the union of four lines or has 3 cusps, we obtain  the last two cases in the table.
 Similarly, if $\tilde X$ and $C$ are as in Example \ref{hyp} and $C$ has 3 nodes, 2 nodes and a cusp, or 1 node and two cusps, we obtain examples for the third, sixth and the last case in Table \ref{g=0} respectively.
 \end{example}

\section{The case $l=0$}\label{trivpic}
In this section we will assume that $l=0$, i.e. $\sigma^*$ acts as the identity on $S(\sigma^2)$.

\begin{pro}\label{ell0}
Let $\sigma$ be a non-symplectic automorphism on a K3 surface $X$ such that $l=0$. 
Then $\Fix(\sigma)$ is the disjoint union of smooth rational curves and points, $r=2\,(\!\!\!\!\mod 4)$  and $a=0$.
\end{pro}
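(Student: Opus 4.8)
The plan is to derive all three assertions from the relations of Propositions~\ref{rel4} and~\ref{lat}, once one knows that $l=0$ forces $\sigma$ to be purely non-symplectic and to act trivially on $S(\sigma^2)$.

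First I would note that $\sigma$ cannot have symplectic square, since in that case $l=8$. Hence $\sigma$ is purely non-symplectic, Propositions~\ref{rel4} and~\ref{lat} apply, and $m\geq1$ because $\omega_X$ spans a line in one of the $(\pm i)$-eigenspaces of $\sigma^*$. The congruence $r\equiv2\pmod4$ is then immediate from $4\alpha=r-l-2=r-2$ together with the fact that $\alpha=\sum_{C_i\subset\Fix(\sigma)}(1-g(C_i))$ is an integer. For $S(\sigma)=S(\sigma^2)$: since $(\sigma^2)^*$ is the identity on $S(\sigma^2)$, the restriction of $\sigma^*$ to $S(\sigma^2)$ has order dividing $2$, and its $(-1)$-eigenspace, being contained in $S(\sigma^2)\otimes\IR$, has rank $l=0$; so $\sigma^*=\id$ on $S(\sigma^2)$. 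Consequently, if $\sigma$ exchanged two smooth rational curves $F,F'\subset\Fix(\sigma^2)$, their classes (fixed by $\sigma^2$, hence in $S(\sigma^2)=S(\sigma)$) would satisfy $[F']=\sigma^*[F]=[F]$, impossible since $[F]^2=-2\neq0=[F]\cdot[F']$; thus $a=0$.

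It remains to show that $\Fix(\sigma)$ contains no curve of genus $\geq1$, which I would prove by contradiction: assume $C\subset\Fix(\sigma)$ with $g(C)\geq1$. The locus $\Fix(\sigma^2)$ cannot be a disjoint union of two elliptic curves, since Proposition~\ref{lat} would then give $r+l=10$, hence $r=10$ and $\alpha=2$, whereas every curve fixed by $\sigma$ would be elliptic, forcing $\alpha=0$. So $\Fix(\sigma^2)=C\sqcup(\text{smooth rational curves})$ with $C$ its unique non-rational component, hence $C\subset\Fix(\sigma)$; moreover, since $a=0$, every rational curve of $\Fix(\sigma^2)$ not contained in $\Fix(\sigma)$ is $\sigma$-invariant and carries exactly two isolated fixed points of $\sigma$. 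Counting Euler characteristics, the curve $C$ and the rational curves fixed by $\sigma$ contribute equally to $\chi(\Fix(\sigma))$ and to $\chi(\Fix(\sigma^2))$, and each of the remaining rational curves contributes $2$ on both sides, so $\chi(\Fix(\sigma))=\chi(\Fix(\sigma^2))$. On the other hand the topological Lefschetz formula for $\sigma$ and for $\sigma^2$ gives $\chi(\Fix(\sigma))=2+r-l$ and $\chi(\Fix(\sigma^2))=2+r+l-2m$, so their difference equals $2l-2m$; therefore $l=m$, and with $l=0$ this yields $m=0$, contradicting $m\geq1$. Hence $\Fix(\sigma)$ is a disjoint union of smooth rational curves and isolated points.

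The delicate point is this last step: one must carefully enumerate the possible actions of $\sigma$ on the rational components of $\Fix(\sigma^2)$ and check the Euler-characteristic bookkeeping, and separately rule out two elliptic curves --- though both are controlled by the relations of Section~1. (For $g(C)>1$ one could alternatively quote the identity $l-m=2a$ established in the proof of Lemma~\ref{rel4a}, which with $l=0$ gives $m=-2a\leq0$ right away.)
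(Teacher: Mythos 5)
Your proof is correct. Two of the three assertions are handled exactly as in the paper: $r\equiv 2\pmod 4$ comes from $4\alpha=r-l-2$, and your $a=0$ argument (with $l=0$, $\sigma^*$ is the identity on $S(\sigma^2)$, so two disjoint $(-2)$-curves in $\Fix(\sigma^2)$ cannot be interchanged) is just a rephrasing of the paper's observation that $f_1-f_1'$ would be a nonzero anti-invariant class. Where you genuinely diverge is in excluding a fixed curve of genus $\geq 1$: the paper disposes of this by citing Theorem \ref{g1} for the elliptic case and Lemma \ref{rel4a} (i.e. $l-m=2a$) for $g>1$, whereas you first establish $a=0$ and then run the Euler-characteristic comparison directly for any fixed curve of genus $\geq 1$ --- after ruling out the two-elliptic-curve configuration via $r+l=10$ against $\alpha=2$ --- getting $\chi(\Fix(\sigma^2))=\chi(\Fix(\sigma))$, hence $l=m$ by the two Lefschetz formulas, hence $m=0$, a contradiction. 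Your bookkeeping is sound (each $\sigma$-invariant but not pointwise fixed rational component of $\Fix(\sigma^2)$ carries exactly two isolated fixed points, and the positive-genus curve is pointwise fixed), and in effect you extend the identity $l-m=2a$ of Lemma \ref{rel4a}, proved in the paper only for $g>1$, to $g=1$. What this buys you is self-containedness: you avoid the elliptic-fibration analysis behind Theorem \ref{g1}, which is stated under the additional hypothesis $\Pic(X)=S(\sigma^2)$ not present in Proposition \ref{ell0}, so your route is both more elementary and logically cleaner at that point; the paper's route is shorter on the page but leans on later and stronger machinery.
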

\begin{proof} 
By Theorem \ref{g1}, if $\Fix(\sigma)$ contains an elliptic curve, then $l\geq 4$. On the other hand, if $\Fix(\sigma)$ contains a curve of genus $g>1$, then $l=2a+m>0$ by Lemma \ref{rel4a} .
Thus the fixed locus of $\sigma$ only contains smooth rational curves.
Observe that $r=2\,(\!\!\!\!\mod 4)$  by Proposition \ref{rel4}.
 If $a$ is not zero, then there are two rational curves $F_1, F'_1$ fixed by $\sigma^2$ such that $\sigma(F_1)=F'_1$.
If $f=[F_1]-[F'_1]$ in $\Pic(X)$, then $\sigma^*(f)=-f$ and $f\not=0$, contradicting $l=0$.
\end{proof}
\begin{lemma}\label{iso}
Let $L$ be a lattice which is the direct sum of lattices isomorphic to $U\oplus U$, $U\oplus U(2)$, $E_8$ or $D_{4k}$, $k\geq 1$. Then $L$ has an isometry $\tau$ with $\tau^2=-\id$ acting trivially on $A_L={\rm Hom}(L,\IZ)/L$.
\end{lemma}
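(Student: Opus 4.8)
The plan is to construct $\tau$ summand by summand. Since the orthogonal direct sum of isometries $\tau_i$ with $\tau_i^2=-\id$ again has square $-\id$, and since $A_L=\bigoplus_iA_{L_i}$ with $\tau=\bigoplus_i\tau_i$ acting on it as $\bigoplus_i(\tau_i|_{A_{L_i}})$, it suffices to produce, on each of the four building blocks $U\oplus U$, $E_8$, $D_{4k}$, $U\oplus U(2)$, an isometry of order four whose square is $-\id$ and which induces the identity on the discriminant group.

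On the two unimodular blocks the discriminant condition is automatic. For $U\oplus U$, with hyperbolic bases $e_1,f_1$ and $e_2,f_2$, the isometry $\tau\colon e_1\mapsto e_2\mapsto -e_1$, $f_1\mapsto f_2\mapsto -f_1$ does the job. For $E_8$, realized as the vectors of $\IR^8$ whose coordinates all lie in $\IZ$ or all in $\IZ+\tfrac12$ and have even sum, I would pair the eight coordinates and let $\tau$ rotate each pair by a quarter turn, $(x_{2j-1},x_{2j})\mapsto(-x_{2j},x_{2j-1})$; a parity check gives $\tau(E_8)=E_8$, and $\tau^2=-\id$. The same quarter-turn isometry works on $D_{4k}=\{x\in\IZ^{4k}:\sum_ix_i\in2\IZ\}$: it preserves $D_{4k}$ and squares to $-\id$. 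Here the discriminant has to be checked: $A_{D_{4k}}$ is generated by the classes of $e_1=(1,0,\dots,0)$ and $c=(\tfrac12,\dots,\tfrac12)$, and one computes $\tau(e_1)-e_1=e_2-e_1\in D_{4k}$ and $\tau(c)-c=-\sum_{j=1}^{2k}e_{2j-1}\in D_{4k}$, the latter because its coordinate sum is $2k$; this is the point where $4\mid 4k$ enters.

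The block $U\oplus U(2)$ is the only delicate case: $O(U)$ and $O(U(2))$ consist only of $\pm\id$ and $\pm$ the hyperbolic exchange, so they have exponent two and $\tau$ must mix the two summands, while the naive ``exchange with a sign'' on $U(2)\oplus U(2)$ does not help because it permutes the two factors of $A_{U(2)\oplus U(2)}$ and hence is not defined on the overlattice $U\oplus U(2)$. Instead I would exhibit a $\IZ[i]$-module structure and take $\tau=$ multiplication by $i$. Concretely, the lattice with Gram matrix
$$
G=\begin{pmatrix}0&0&1&-1\\0&0&1&1\\1&1&0&0\\-1&1&0&0\end{pmatrix}
$$
in a basis $v_1,v_2,v_3,v_4$ is even of signature $(2,2)$, and a short glue-vector computation shows its discriminant form equals that of $U(2)$; by Nikulin's classification of $2$-elementary lattices \cite[Theorem 4.3.1]{nikulinfactor} it is therefore isometric to $U\oplus U(2)$. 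On this model the map $\tau\colon v_1\mapsto v_2\mapsto -v_1$, $v_3\mapsto v_4\mapsto -v_3$ satisfies $\tau^{t}G\tau=G$ and $\tau^2=-\id$, and it fixes the discriminant because it carries the generators $\tfrac12(v_3-v_4)$, $\tfrac12(v_1+v_2)$ of $A_{U\oplus U(2)}$ to $\tfrac12(v_3+v_4)$, $\tfrac12(v_2-v_1)$, which differ from them by the elements $v_3,v_2\in L$. Taking the orthogonal sum of the $\tau_i$ produces the required isometry of $L$.

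The step that needs genuine thought is exactly the last one: recognizing that $U\oplus U(2)$, although it does not split off a $U$-summand carrying an order-four isometry, admits an integral complex structure in a suitable basis which moreover acts trivially on the discriminant. The other three blocks only involve routine parity and determinant bookkeeping.
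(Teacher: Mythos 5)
Your proof is correct and follows the paper's overall strategy (build $\tau$ block by block and take the orthogonal sum), but it replaces the paper's two shortcuts with explicit constructions, so the routes genuinely differ in the details. For $E_8$ and $D_{4k}$ the paper simply quotes the fact that the Weyl group contains an element of square $-\id$ (trivial on $A_L$ because reflections in roots are); your paired quarter-turn model proves this by hand and correctly isolates where $4\mid 4k$ enters, namely that $\tau(c)\pm c$ has even coordinate sum for the glue vector $c=(\tfrac12,\dots,\tfrac12)$ precisely because the number of coordinate pairs is even. For $U\oplus U$ both arguments use the sign-twisted exchange of the two hyperbolic planes. The real divergence is $U\oplus U(2)$: the paper dismisses it with ``such an action can be defined similarly'', which cannot be read literally since $U\not\cong U(2)$ and the naive exchange is not an isometry; your observation that $O(U)$ and $O(U(2))$ have exponent two, so $\tau$ must mix the summands, together with the explicit Gram matrix $G$ carrying a visible $\IZ[i]$-structure, supplies exactly the content the paper leaves out, and it checks: $G$ is even of signature $(2,2)$ with $G^2=2I$, the discriminant form is that of $U(2)$, and $\tau$ fixes both generators of $A_L$ modulo $L$. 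Two cosmetic points. First, the cited Theorem 4.3.1 of Nikulin concerns hyperbolic $2$-elementary lattices, while your model has signature $(2,2)$; either invoke the general uniqueness statement for indefinite even $2$-elementary lattices, or more simply note that $v_1,\,v_3,\,v_1-v_2,\,v_3+v_4$ is a $\IZ$-basis of your lattice in which the Gram matrix is visibly $U\oplus U(2)$, so no classification theorem is needed. Second, in the last verification you wrote the sums $\tau(x)+x=v_3$ and $\tau(y)+y=v_2$ rather than the differences ($v_4$ and $-v_1$); since the classes are $2$-torsion this is immaterial, but state which you mean.
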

\begin{proof}
It is known that the Weyl group of a lattice $L$ isomorphic to either $E_8$ or $D_{4k}$, $k\geq 1$,  contains an isometry $\tau$ with $\tau^2=-\id$ acting trivially on $A_L$ \cite{cc}.
An isometry $\tau$ of $U\oplus U$ as in the statement can be defined as follows:
$$\tau:\ e_1\mapsto e_2,\ e_2\mapsto -e_1,\ e_3\mapsto e_4,\ e_4\mapsto -e_3,$$
where $e_1,e_2$ and $e_3,e_4$ are the natural generators of the first and second copy of $U$.
Such an action can be defined similarly on $U\oplus U(2)$.
\end{proof}

By Proposition \ref{ell0} the fixed locus of $\sigma$ and of its square are as follows:
$$\Fix(\sigma^2)=C\cup (E_1\cup\cdots\cup E_k)\cup (G_1\cup \cdots\cup G_{n_1/2}),$$
$$\Fix(\sigma)=E_1\cup\cdots\cup E_k\cup\{p_1,\dots,p_n\},$$
where $C$ is a curve of genus $g$, $E_i, G_i$ are smooth rational curves such that
$\sigma(G_i)=G_i$ and each $G_i$ contains exactly two isolated fixed points of $\sigma$.
We will denote by $n_2=n-n_1$ the number of isolated fixed points of $\sigma$ contained in $C$.

\begin{theorem}\label{teotaki}
Let $\sigma$ be a non-symplectic automorphism on a K3 surface $X$ such that  $S(\sigma^2)=S(\sigma)=\Pic(X)$. Then the invariants of the fixed locus of $\sigma$ and the lattices $S(\sigma^2)$ and $T(\sigma^2)$ (up to isomorphism) appear Table \ref{l=0}. Moreover, all cases in the table do exist.

\end{theorem}
\begin{proof}
By Proposition \ref{rel4} we have that
$$\mathcal X(\Fix(\sigma^2))-\mathcal X(\Fix(\sigma))=2-2g-n_2=-2m.$$
 Moreover, since $\sigma$ acts on $C$ as an involution with $n_2$ fixed points, then $g\geq \frac{n_2}{2}-1$ by Riemann-Hurwitz formula.
These remarks, together with Proposition \ref{rel4} and the classification theorem of $2$-elementary even lattices \cite[Theorem 3.6.2]{nikulinintegral} give the list of cases appearing in the table and some more with $S(\sigma^2)$ isomorphic to one of the following lattices:
$$U\oplus A_1^{\oplus 4},\ U\oplus D_6\oplus A_1^{\oplus 2},\  U\oplus D_4\oplus A_1^{\oplus 4},\ U\oplus E_8\oplus A_1^{\oplus 4},\ U\oplus E_8\oplus E_7\oplus A_1,$$ 
$$U\oplus E_7\oplus A_1,\ (2)\oplus A_1.$$

 In the first five cases $X$ has a $\sigma$-invariant jacobian elliptic fibration $\pi:X\map \IP^1$ with more than two reducible fibers by Theorem \ref{jac}. Since $\sigma$ fixes $>2$ points in the basis of $\pi$, then it preserves each fiber of $\pi$. 
 This gives a contradiction since $\sigma$ should have two fixed points in each fiber while the fibration has a unique section fixed by $\sigma$.

We now show that the case $S(\sigma^2)\cong(2)\oplus A_1$ does not appear. Let $e,f$ be the generators of $S(\sigma^2)$ with $e^2=2, f^2=-2, e\cdot f=0$.  The class $e$ is nef and the associated morphism is a degree two map $\pi:X\map \IP^2$ which is the minimal resolution of the double cover branched along an irreducible plane sextic $S$ with a node at the image of the curve with class $\pm f$.
Since $\sigma^*(e)=e$, then $\sigma$ induces a projectivity $\bar \sigma$ of $\IP^2$ with $\bar\sigma^2=\id$ (since it fixes $\pi(C)$).
This implies that, up to a choice of coordinates, a birational model of $X$ and $\sigma$ are given as follows:
$$X:\ w^2=f_6(x_0,x_1,x_2),\quad \sigma(x_0,x_1,x_2,w)=(-x_0,x_1,x_2, iw),$$
where $f_6$ is a homogeneous degree six polynomial with $\sigma(f_6)=-f_6$ and singular at one point.
Observe that such polynomial $f_6$ contains $x_0=0$ as a component, giving a contradiction. 
If $S(\sigma)\cong U\oplus E_7\oplus A_1\cong (2)\oplus A_1\oplus E_8$, 
 then $X$ is the minimal resolution of the double cover of $\IP^2$ branched along an irreducible sextic with a node and a triple point of type $E_8$.  We can  exclude this case by an argument similar to the previous one.

If $T(\sigma^2)$ is any lattice appearing in Table \ref{l=0}, then it carries an isometry $\tau$ with $\tau^2=-\id$ and acting trivially on the discriminant group by Lemma \ref{iso}.
It follows that the isometries $\id_{S(\sigma^2)}$ and $\tau$ glue to give an order four isometry $\rho$ of $L_{K3}$.
By the Torelli-type Theorem \cite[Theorem 3.10]{namikawa} there exists a K3 surface $X$ with a non-symplectic automorphism $\sigma$ of order four such that $\sigma^*=\rho$ up to conjugacy.
Thus all cases in the table do exist.
\end{proof}
\begin{table}[h!] 
$$
\begin{array}{cc|cccc|ll}
m & r & n_1 &n_2 & k & g & S(\sigma^2) & T(\sigma^2) \\
\hline
10 & 2 &  2 & 2 & 0 & 10 & U	& U\oplus U\oplus E_8^{\oplus 2}\\ 
  & 2  & 0 & 4 & 0  & 9 & U(2)  & U\oplus U(2)\oplus E_8^{\oplus 2}\\
 \hline
 8 & 6 &  2 & 4 & 1 & 7 & U\oplus D_4 & U \oplus U\oplus E_8\oplus D_4\\
  & 6 & 0 & 6 & 1 & 6 &  U(2)\oplus D_4 & U\oplus U(2)\oplus E_8\oplus D_4\\
\hline
6 & 10 &  6 & 2& 2& 6 & U\oplus E_8 & U \oplus U \oplus E_8\\
 & 10 & 4 & 4 & 2 & 5 & U(2)\oplus E_8 & U\oplus U(2)\oplus E_8\\
 & 10 & 2 & 6 & 2& 4 & U\oplus D_4^{\oplus 2} & U\oplus U\oplus D_4^{\oplus 2}\\
 & 10 & 0 & 8 & 2 & 3 & U(2)\oplus D_4^{\oplus 2} & U\oplus U(2)\oplus D_4^{\oplus 2}\\ 
\hline
4 & 14 & 6 & 4 & 3 & 3 & U\oplus E_8\oplus D_4 & U \oplus U \oplus D_4\\
 & 14 & 4 & 6 & 3 & 2 &  U(2)\oplus E_8\oplus D_4 & U\oplus U(2)\oplus D_4\\
\hline
2 & 18 & 10 & 2 & 4 & 2 & U\oplus E_8^{\oplus 2} & U\oplus U\\
 & 18 & 8 & 4 & 4 & 1 & U(2)\oplus E_8^{\oplus 2} & U\oplus U(2)\\
\end{array}
$$
\  \\
\ \\
\caption{The case $l=0$}\label{l=0}
\end{table}

\begin{remark}
In all cases of Table \ref{l=0} the lattices $S(\sigma^2)$ and $T(\sigma^2)$ are $2$-elementary even lattices with $x^2\in\IZ$ for all $x\in L^{\vee}={\rm Hom}(L,\IZ)$.
A lattice theoretical proof of this fact and an alternative proof of Theorem \ref{teotaki} are given by Taki in\cite[Proposition 2.4]{Taki}.
\end{remark}

\begin{example}\label{hirz}
Consider the elliptic fibration $\pi:X\map \IP^1$ in Weierstrass  form given by
$$y^2=x^3+a(t)x+b(t),$$
where $a(t)$ is an even polynomial and $b(t)$ is an odd polynomial in $t$.
Observe that it carries the order four automorphism
$$(x,y,t)\mapsto (-x, iy,-t).$$
For generic coefficients $X$ is a K3 surface, the fixed locus of $\sigma^2$ is the union of the curve of genus $10$ defined by $y=0$ and the section $x=z=0$ and $\sigma$ fixes four points on them (in the fibers over $t=0, \infty$).
It follows by Theorem \ref{inv}, Proposition \ref{rel4} and \cite[Theorem 4.3.1]{nikulinfactor} that $S(\sigma^2)\cong U$, $r=2$, $l=0$ and $m=10$.

A geometric construction of this family of K3 surfaces can be given as follows.
Let $Y$ be the Hirzebruch surface $\mathbb F_4$ and $e,f\in \Pic(Y)$ be the classes of the $(-4)$-curve and of a fiber respectively. 
Observe that  a section of $-2K_Y=4e+12f$ is the disjoint union of the $(-4)$-curve $E$ and a curve $C$ with class $3e+12f$ ($e$ is in the base  locus of  $4e+12f$). The generic such $C$ is smooth 
and the double cover $p:X\map Y$ branched along $C\cup E$ is a K3 surface. We denote by $\tilde C$ and $\tilde E$ the pull-backs of $C$ and $E$ by $p$, observe that $g(\tilde C)=10$ and $g(\tilde E)=0$. 
We will consider the affine coordinates $t=u_1/u_2$ and $x=v_1/v_2$, where $u_1, u_2$ give a basis of $H^0(Y,f)$ and $v_1\in H^0(Y,e+4f)$, $v_2\in H^0(Y,e)$ are non-zero sections. 
Let $\iota\in {\rm Aut}(Y)$ be the involution on $Y$ given by $(t, x)\mapsto (-t,-x)$ and let $\varphi=0$ be the equation of $\tilde C$ in local coordinates. 
If $\varphi(-t,-x)=-\varphi(t,x)$, then $\varphi(t,x)=x^3+a(t)x+b(t)$ where $a$ is an even, degree $8$ and $b$ is an odd, degree $11$  polynomial in $t$.
In this case $\iota$ lifts to an order $4$ automorphism $\sigma$ on $X$, in local coordinates:
$$X:\ y^2=\varphi(t,x),\qquad \sigma(t,x,y)=(-t,-x,iy).$$
The ruling of $Y$ induces a jacobian elliptic fibration on $X$ having $\tilde C$ as a trisection and $\tilde E$ as a section. The Weierstrass equation of such fibration and the action of $\sigma$ on it are clearly the same as the ones given for $\pi$ at the beginning of this example.
 \end{example}

\begin{example}\label{quadric} 
Consider the involution $\iota: ((x_0,y_0), (x_1,y_1))\mapsto ((x_0,-y_0), (x_1,-y_1))$ of $\IP^1\times \IP^1$ and let $f$ be a bihomogeneous polynomial of degree $(4,4)$ such that $\iota(f)=-f$.
If $C=\{f=0\}$ is a smooth curve, then the double cover of $\IP^1\times \IP^1$ branched along $C$ 
$$X:\ w^2=f(x_0,x_1,y_0,y_1)$$ is a K3 surface and carries the order four automorphism:
$$\sigma:\ (((x_0,y_0), (x_1,y_1)), w)\mapsto  (((x_0,-y_0), (x_1,-y_1)), iw).$$
The fixed locus of $\sigma^2$ is the genus $9$ curve defined by $w=0$ and $\sigma$ fixes $4$ points on it.
It follows by Theorem \ref{inv}, Proposition \ref{rel4} and \cite[Theorem 4.3.1]{nikulinfactor} that $S(\sigma^2)\cong U(2)$ (it is the pull-back of the Picard group of the quadric), $r=2$, $l=0$ and $m=10$.
\end{example}

\section{$\Fix(\sigma)$ only contains isolated points}
Let $\sigma$ be a purely non-symplectic automorphism of order four on a K3 surface $X$  having only isolated fixed points. It follows from Proposition \ref{rel4}
 that $\Fix(\sigma)$ contains exactly four points $p_1,\dots,p_4$. Moreover, the fixed locus of $\sigma^2$ is as follows:
 $$\Fix(\sigma^2)=C\cup (F_1\cup F'_1)\cup \cdots\cup (F_a\cup F'_a)\cup G_1\cup\cdots\cup G_{n_1/2},$$
where each $G_i$ is a smooth rational curve which contains $2$ fixed points of $\sigma$ and $C$ is a smooth genus $g$ curve which contains the remaining 
$n_2:=4-n_1$ fixed points.

\begin{theorem}\label{teopunti}
Let $\sigma$ be a purely non-symplectic automorphism of order $4$ having only isolated fixed points on a K3 surface $X$. Then $\sigma$ fixes exactly $4$ points. Moreover, if $\Pic(X)=S(\sigma^2)$ and $l>0$, then the invariants of $\Fix(\sigma^2)$ and the lattice $S(\sigma^2)$ (up to isomorphism) appear in Table \ref{alpha=0}.
{\small
\begin{table}[ht]
$$
\begin{array}{cc|ccc|l}
m & r  & n_1   & g &a & S(\sigma^2)   \\
\hline
 9 & 3 & 2 &  8 & 0 & U\oplus A_1^{\oplus 2}  \\
  &  & 0 &   7 & 0 & U(2)\oplus A_1^{\oplus 2} \\   
 \hline 
   8 & 4     & 2  &  6 &  0 & U\oplus A_1^{\oplus 4} \\
    &    &   0  &  5 & 0& U(2)\oplus A_1^{\oplus 4}  \\
   \hline
  7 &  5     & 2  &  4 & 0 &U\oplus A_1^{\oplus 6}  \\
   &    &     0  &  3 &  0&U(2)\oplus A_1^{\oplus 6} \\
   \hline
6 & 6  &  4 &  3 & 0 & U(2)\oplus D_4^{\oplus 2}   \\
   &    &  2 &  2 & 0&U\oplus A_1^{\oplus 8} \\
   &    &  0 &  1 &  0 &U(2)\oplus A_1^{\oplus 8}, U\oplus E_8(2)\\
   &    &  0 &  3 &  1 & U\oplus D_4\oplus A_1^{\oplus 4} , U(2)\oplus D_4^{\oplus 2} \\
   &    &  2 &  4 &  1 &U\oplus D_4^{\oplus 2}, U\oplus D_6\oplus A_1^{\oplus 2} \\
   \hline
5 & 7  & 4  &  1 & 0  & U(2)\oplus D_4^{\oplus 2}\oplus  A_1^{\oplus 2} \\
  &    & 2  &  0 & 0 &U\oplus A_1^{\oplus 10} \\
   &        & 0  &  1 &  1   & U(2)\oplus D_4^{\oplus 2}\oplus  A_1^{\oplus 2}   \\ 
   &        & 2  &  2 &   1& U\oplus D_4^{\oplus 2}\oplus A_1^{\oplus 2}   \\   
   &        & 0  &  3 &  2  & U\oplus E_7\oplus A_1^{\oplus 3} \\
   &        &2   &  4 &  2 & U\oplus E_8\oplus A_1^{\oplus 2}\\
\hline
4 & 8  & 2  &  0 & 1 & U\oplus D_4^{\oplus 2}\oplus A_1^{\oplus 4}  \\
   &        & 4  &  1 &   1& U(2)\oplus D_6^{\oplus 2}, U\oplus D_4^{\oplus 3}\\
   &        & 0    & 1 &  2  &U(2)\oplus D_6^{\oplus 2}, U\oplus D_4^{\oplus 3}\\ 
   &        & 2    &  2 &  2 & U\oplus D_6^{\oplus 2}, U(2)\oplus D_4\oplus E_8\\
   &        &0       &  3 &  3  & U\oplus E_8\oplus D_4 \\
\hline
3 & 9  & 2  &  0 & 2  & U\oplus D_6^{\oplus 2}\oplus A_1^{\oplus 2}  \\
 &          & 4  &  1 &  2 & U(2)\oplus E_7^{\oplus 2}  \\
  &         & 0  &  1 &   3  &  U(2)\oplus E_7^{\oplus 2}\\
   &          & 2  &  2 & 3 & U\oplus E_7^{\oplus 2}  \\
\hline
2 & 10  & 2  &  0 & 3  &U\oplus E_7^{\oplus 2}\oplus A_1^{\oplus 2}, U\oplus D_8^{\oplus 2} \\  
  &           & 4  &  1 &  3   & U\oplus E_8\oplus E_7\oplus A_1,U(2)\oplus E_8^{\oplus 2} \\   
  &           & 0  &  1 &   4  & U\oplus E_8\oplus E_7\oplus A_1,U(2)\oplus E_8^{\oplus 2}  \\   
\hline
1 & 11  & 2  &  0 &  4  &U\oplus E_8^{\oplus 2}\oplus A_1^{\oplus 2} 
  \end{array}
  $$
  \ \\
  \ \\
\caption{The case $\alpha=k=0,\ l>0$}\label{alpha=0}
\end{table}
}
\end{theorem}
\begin{proof}    Since $\alpha=k=0$, it follows from Proposition \ref{rel4} that $n=4$, $r=l+2$ and $m=10-l$. The fixed locus of $\sigma^2$ contains $j=2a+n_1/2$ smooth rational curves and a curve of genus $g$. Thus by Theorem \ref{inv} we obtain $r+l=11-g+2a+n_1/2$.

Observe that the case $l=1, n_1=4$ does not exist since in this case, by Theorem \ref{inv} and \cite[Theorem 4.3.1]{nikulinfactor} (see Figure 1 in \cite{ast}), a curve fixed by $\sigma^2$ has genus $\leq 8$. 
 
 We now assume that $S(\sigma^2)\cong U\oplus R$, where $R$ is a direct sum of root lattices. 
 If $g>4$, then by Theorem  \ref{jac} the surface $X$ carries a $\sigma$-invariant jacobian elliptic fibration $\pi$ with reducible fibers of type $R$.
The automorphism $\sigma$ acts as an involution on $\IP^1$ and preserves two fibers $F,F'$ of $
\pi$. The curve $C\subset \Fix(\sigma^2)$ of genus $g$ intersects each fiber at $3$ points and  $C\cap F$, $C\cap F'$ are $\sigma$-invariant. This implies that $C$ contains at least two fixed points. Moreover, $\pi$ has a unique section fixed by $\sigma^2$ which is $\sigma$-invariant. Thus $n_1=n_2=2$. 
In particular $a=0$ if the rational curves fixed by $\sigma^2$ are at most $2$.

Observe that, since $C$ intersects the generic fiber of $\pi$ in three points, then it is trigonal.
If $g\geq 3$, this implies that $C$ is not hyperelliptic, hence the canonical morphism of $C$ is an embedding in $\IP^{g-1}$. 
The involution $\sigma$ on $C$ is thus induced by an automorphism of the projective space.
If $g=3$, this implies that $C$ is isomorphic to a plane quartic and, since an involution of $\IP^2$ fixes a line, then $n_2>0$. 
 
 If $(m,r,g,a)=(8,4,6,0)$ and $S(\sigma^2)\cong U(2)\oplus D_4$, then $X$ is the double cover of $\IP^2$ branched along the union of a smooth quintic and a line \cite[\S 4.2]{L}.
 By \cite[Corollary 2.4.3]{FP} an involution on a smooth plane quintic has quotient of genus two, equivalently  it has $6$ fixed points. Thus this case does not appear.

If $(m,r,g,a)=(7,5,6,1)$, then $S(\sigma^2)\cong U\oplus D_6$. Observe that $\sigma^2$ fixes the section of $\pi$ and two irreducible components of the fiber of type ${\rm I}_2^*$. Since $a=1$ and the fibration is $\sigma$-invariant, then $\sigma$ should act as a reflection on the fiber ${\rm I}_2^*$, but this is not possible since the fibration has a unique section. Thus this case does not appear.

The cases in the table are then obtained by taking all possible values for $l$, using the equations and remarks at the beginning of the proof. The lattices can be computed by means of  Theorem \ref{inv}, Proposition \ref{rel4}, and the classification theorem of $2$-elementary lattices \cite[Theorem 4.3.1]{nikulinfactor}.
\end{proof}

\begin{example}\label{hirz1}
Let $C, E\subset \mathbb F_4$ as in Example \ref{hirz}. If $C$ has rational double points, then the minimal resolution $X$ of the double cover of $\mathbb F_4$ branched along $C\cup E$ is still a K3 surface.
If $C$ has two nodes exchanged by $\iota$, then $\iota$ lifts to an order four automorphism $\sigma$ of $X$ such that $\sigma^2$ fixes a curve of genus $8$ (the pull-back of the proper transform of $C$) and a smooth rational curve $\tilde E$ (the pull-back of the $(-4)$-curve $E$).
Observe that here $l>0$ since the exceptional divisors over the two nodes are exchanged by $\sigma$.
The lattice $S(\sigma^2)$ in this case is isomorphic to $U\oplus A_1^{\oplus 2}$.

If $C$ has two triple points exchanged by $\iota$, then $\sigma^2$ fixes the pull-back of the proper transform of the curve $C$,  $\tilde E$ and the central components of the resolution trees over the two singular points, which are of type $\tilde D_4$. In this case $a=1$, since such components are exchanged by $\sigma$, and the lattice $S(\sigma^2)$  is isomorphic to $U\oplus D_4^{\oplus 2}$.
Similarly, we get examples if the triple points of $C$ are simple singularities of type $D_n (n\geq 6), E_7, E_8$.

Considering $C$ with ordinary nodes (up to $10$) and triple points exchanged by $\iota$ we obtain several examples for the cases in the table with $S(\sigma^2)\cong U\oplus R$, where $R$ is a direct sum of root lattices.

Similarly, we can construct examples for the cases of type $U(2)\oplus R$ by  generalizing Example \ref{quadric} to the case when the curve $C$ in $\mathbb P^1\times \mathbb P^1$ has simple singularities. 
\end{example}

\begin{example}\label{jaco} Consider the jacobian elliptic fibration $\pi:X\map \IP^1$ defined as follows:
$$y^2=x(x^2+a(t^4)x+b(t^4)),$$
where $a, b$ are polynomials of degree $1$ and $2$ respectively.
Observe that $\pi$ has a $2$-torsion section $t\mapsto (0,0,t)$. The translation by this section gives a symplectic involution $\tau$ on $X$.
Moreover, $X$ has the order four non-symplectic automorphisms $\sigma: (x,y,z,t)\mapsto (x,y,z,it)$ and $\sigma':=\sigma\circ \tau$.
For generic $a,b$, $\pi$ has $8$ fibers of type ${\rm I}_2$ and $8$ fibers of type ${\rm I}_1$.
The automorphisms $\sigma$ and $\sigma'$ act with order four on $\IP^1$, preserve the two smooth fibers over $t=0$ and $t=\infty$ and act as an involution over $t=\infty$. Moreover, $\sigma$ fixes pointwisely the fiber over $t=0$, while $\sigma'$ acts as an order two translation on it.

For special choices of $a$ and $b$ we can obtain examples with reducible fibers of type ${\rm I}_{4M}$ over $t=0$ or $t=\infty$.
For example, if $a(t^4)=t^4$ and $b(t^4)=1$, then $\pi$ has a smooth fiber over $t=0$ and a fiber of type ${\rm I}_{16}$ over $t=\infty$.
The automorphism $\sigma$ fixes pointwisely the fiber over $t=0$ and acts on the fiber over $t=\infty$ as a reflection which leaves invariant the components $\Theta_0, \Theta_8$ (see the notation in Example \ref{vinberg}), giving 
$k=0, a=3, n=4$.
The symplectic involution $\tau$ acts over $t=\infty$ as a rotation sending $\Theta_0$ to $\Theta_8$.
Finally, the automorphism $\sigma'$ acts over $t=0$ as a translation and over $t=\infty$ as a reflection which leaves invariant the components $\Theta_4,\Theta_{12}$, giving $k=0, a=3, n_2=0, n_1=4$ (see Table \ref{alpha=0}). 
For more details on this example se also \cite[Proposition 4.7]{vGS}.
\end{example}

\section{The other cases}\label{other}
At this point of the classification the cases left out are those with
$$
\Fix(\sigma)=E_1\cup\cdots\cup E_k\cup\{p_1,\ldots,p_n\} 
$$
$$
\Fix(\sigma^2)=C\cup (E_1\cup\cdots\cup E_k)\cup(F_1\cup F_1'\cup\cdots\cup F_a\cup F_a')\cup (G_1\cup\cdots \cup G_{n_1/2})
$$
where $C$ is a curve of genus $g>0$, $k>0$, $l>0$ and $n_2=n-n_1$ is the number of fixed points on $C$. Observe that in this case $\alpha=k$. 

\begin{theorem}\label{curvefissate}
Assume that $g=g(C)>1$. Then $g\leq m$ and we are in one of the following cases:
$$
\begin{array}{c|c|c|c}
m+l&k&g\leq &a\leq  \\
\hline
4&3&3&2\\
6&2&5&3\\
8&1&7&4
\end{array}
$$

\end{theorem}
\bprf 
By computing the
difference $\chi(\Fix(\sigma^2))-\chi(\Fix(\sigma))$ topologically and using the Lefschetz's formula, one gets
the relation 
\begin{equation}\label{punti}
g-2a=m-l+1-\frac{n_2}{2}.
\end{equation}
By Theorem \ref{inv} we also have:
\[
g+j=g+2a+k+\frac{n_1}{2}\leq 11.
\]
By the previous two relations and Proposition \ref{rel4} we get
\[
g\leq 5-k+\frac{m-l}{2}=m,
\]
and
\begin{equation}\label{a}
4a\leq 8-2k+n_2+l-m\leq 12+l-m,
\end{equation}
where we obtain the last inequality using $n_2\leq n=2k+4$. 
By Proposition \ref{rel4} we have that $m+l$ is even and $m+l\leq 8$,
so that $m+l=4,6,8$.

We now show that the case $m+l=4$, $k=3$ and $a=3$ is not possible. By \eqref{a} we get $l\geq m$ and by \eqref{punti}
with $n_2\leq 2k+4=10$ we get $g\leq 2+m-l$. If $l>m$ then $g<2$, which is not possible. If $l=m$ then we  have 
$g=2$, hence again  $n_2=10$ by \eqref{punti}, contradicting the Riemann-Hurwitz formula. The case
$m+l=6$, $a=4$ can be excluded similarly.\\  
\qed

\begin{example}\label{wei2}
Consider the following elliptic K3 surface $\pi:X\map \IP^1$ in Weierstrass form:
$$
y^2=x^3-a(t)x,\ \ \deg a(t)=8.
$$
with the order four automorphism
$$\sigma(x,y,t)=(-x,iy,t).$$
The automorphism $\sigma$ fixes the two sections $x=y=0$, $x=z=0$ (hence $k\geq 2$), while $\sigma^2$ also fixes 
the curve $C:\ y=x^2-a(t)=0$.
The discriminant of the fibration is $\Delta(t)=4a^3(t)$, hence for a generic polynomial $a(t)$ the fibration has $8$ fibers of type ${\rm III}$ (more precisely these are of type ${\rm III }\, b)$). 
Moreover, the curve $C$ has genus $3$ and $\sigma$ has $8$ fixed points on it, so that: $k=2$, $n_2=8$, $a=0$. One can compute also that $l=0$, so that this case appears in Table \ref{l=0}.
We now study how $a(t)$ can split:
\begin{enumerate}[i)]
\item If $a(t)=a_1(t)^2a_6(t)$, then $\pi$ has a fiber of type ${\rm I_0^*}$ and $6$ of type ${\rm III }\, b)$. In this case the ramification points on $C$ are $6$, so that $g(C)=2$. Here $k=2$, $n_1=2$, $n_2=6$, $a=0$.
Thus we have an example in Theorem \ref{curvefissate}.
\item If $a(t)=a_1(t)^2b_1(t)^2a_4(t)$ then $\pi$ two fibers of type ${\rm I_0^*}$ and four of type ${\rm III}\, b)$. Here $k=2$, $n_1=4$, $n_2=4$, $a=0$ and $g=1$. This case appears in Theorem \ref{othergenus1}.
\item If $a(t)=a_1(t)^2b_1(t)^2c_1(t)^2a_2(t)$ then we have $3$ fibers of type ${\rm I_0^*}$ and two of type ${\rm III}\, b)$, so that $k=2$, $n_1=6$, $n_2=2$, $a=0$ and $g=0$. This case appears in Table \ref{g=0}.
\item If $a(t)=a_1(t)^2b_1(t)^2c_1(t)^2d_1(t)^2$ then we have four fibers of type ${\rm I_0^*}$. In this case $C$ splits into the union of two rational curves. In this case we have $k=2$, $n=8$, $a=1$. We are again in one case of Table \ref{g=0}.
\end{enumerate}
For generic $a(t)$ the Mordell-Weil group of $X$ is isomorphic to $\IZ/2\IZ$. The translation by a generator of  the group is a symplectic involution $\iota$ on $X$ and
the composition $\sigma'=\iota \circ \sigma$ is again a non-symplectic automorphism of order $4$ on $X$ which fixes $C$ and exchanges the two sections of $\pi$. 
\end{example}

\begin{example}\label{singu} An easy computation with MAPLE 
 shows that there are 63 possible cases in Theorem \ref{curvefissate}.
One can produce some examples with the fixed locus described there
 starting from Example \ref{hirz} (resp. Example \ref{quadric}) and imposing simple 
singularities on the curve $C$ of genus 10 (resp. genus 9) such that at least one singular point
is invariant for $\iota$. 
For example, one can assume that the curve $C$ in Example \ref{hirz} has an ordinary triple point at an invariant point of $\iota$ and two nodes exchanged by $\iota$. The Dynkin diagram of the resolution over the triple point, which is of type $\tilde D_4$, is $\sigma$-invariant: its simple components are preserved and the double component is fixed by $\sigma$. Thus $k=1$, $n_2=4$, $n_1=2$, $a=0$ and $g=5$.
Observe that if $C$ has just an ordinary triple point at an invariant point, then we are in the case $g=7, k=1, n_1=2, n_2=4$ of Table \ref{l=0}.
Similar examples can be constructed from Example \ref{quadric}.
\end{example}

We now consider the case when $\sigma^2$ fixes an elliptic curve.
\begin{theorem}\label{othergenus1}
With the previous notation, if $g(C)=1$ then we are in one of the cases appearing in Table \ref{others}.
\begin{table}[ht]
$$
\begin{array}{ccc|cccc|c}
m & r & l & n_1& n_2 & k&a & \mbox{type of } C'\\
\hline
5&9&3&2&4&1&0 & {\rm I}_4\\
\hline
4&12&2&4&4&2&0 & {\rm I}_8\\
\hline
3&15&1&6&4&3&0& {\rm I}_{12}\\
\hline
4&10&4&2&4&1&1&{\rm IV^*}\\
4&10&4&6&0&1&0&{\rm IV^*}\\
\end{array}
$$
\caption{The case $g=1, k>0, l>0$.}\label{others}
\end{table}
\end{theorem}

\bprf
Using the relations in the proof of Theorem \ref{curvefissate}  one can find the values in the table, we now show that these are the only possibilities.\\
Since $\sigma$ preserves $C$, then there is a $\sigma$-invariant elliptic fibration $\pi_C:X\map \IP^1$ with fiber $C$. Observe that  $\sigma$ has order four on the basis of $\pi_C$, since otherwise $\sigma^2$ would act as the identity on the tangent space at a point of $C$.
Thus $\sigma$ has two fixed points on $\IP^1$, corresponding to the fiber $C$ and a fiber $C'$ of $\pi_C$.
This implies that all rational curves fixed by $\sigma$ are contained in $C'$, so that $C'$ is reducible (since $k>0$).
Observe that $\sigma$ acts on $C$ either as an involution with four fixed points or as a translation.
By Proposition \ref{rel4} we have $n\geq 6$, so that $C'$ contains at least two fixed points of $\sigma$. 
This excludes the Kodaira types ${\rm I}_2,\,{\rm I}_3,\,{\rm III},\,{\rm IV}$ for $C'$.  
By similar arguments as in the proof of Theorem \ref{g1} also the types ${\rm I}_N^*$, ${\rm II^*}$ and ${\rm III^*}$ can be excluded. If $C'$ has Kodaira type ${\rm IV^*}$ then $\sigma$ either preserves each component or it exchanges two branches. In the first case $a=0$ and either $n=n_1=6$ with $k=1$ or $n_1=n_2=4$ with $k=2$. By Lemma \ref{jimmy} the last case is not possible since if each rational curve is $\sigma$-invariant, then ${\rm IV^*}$ contains only one fixed rational curve.  
In the second case $a=1$, $k=1$, $n_1=2$ and $n_2=4$.

We now consider the case when $C'$ is of type ${\rm I}_N$, $N\geq 4$. 
Since $C'$ contains at least a fixed curve for $\sigma$, then all
components of ${\rm I}_N$ are preserved by $\sigma$,  hence $a=0$. By the previous remarks, since $n_2=4$ or $0$, then either $n_1=2k$ or $n_1=2k+4$ respectively.

If $n_2=4$, then $N=2k+n_1=4k$. For $N>12$ we get $k>3$, which gives $m<3$ by Proposition \ref{rel4}. By the equality \eqref{punti} we get $m-l=2$ so $m=2$ and $l=0$, contradicting the assumption $l>0$. 
Hence we only have the cases in the table. 

On the other hand, if $n_2=0$ we get $N=2k+(2k+4)=4k+4$. By equation \eqref{punti} we get $l=m$ and $k+m=5$ by Proposition \ref{rel4}. If $N\geq 16$ then $k\geq 3$, which gives $m<3$ as before.
If $m=l=1$, then $k=4$, $n_1=12$ and $C'$ is of type ${\rm I}_{20}$. 
If $m=l=2$, then $k=3$, $n_1=10$ and $C'$ is of type ${\rm I}_{16}$. 
If $m=l=3$ (or $m=l=4$) then $k=1$ (respectively $k=2$) and we have a fiber ${\rm I}_8$ (respectively ${\rm I}_{12}$). 
All these cases can be excluded by Lemma \ref{jimmy} since if  $a=0$ then a fiber of type ${\rm I}_{4M}$, $M\geq 2$ has $k=M$. 

\begin{example}\label{ultimo} Observe that the elliptic K3 surface $\pi_C:X\map \IP^1$ in Example \ref{wei} also carries the non-symplectic order four automorphism
$$\sigma'(x,y,t)=(x,-y,it),$$ 
obtained by composing the automorphism $\sigma$ defined there with the non-symplectic involution $y\mapsto -y$.
The automorphism $\sigma^2=(\sigma')^2$ is an involution fixing the smooth elliptic curve over $t=0$ and some rational curves in the singular fiber over $t=\infty$. 
Generically, $\sigma'$ acts on the elliptic curve over $t=0$ as an involution with $4$ isolated fixed points and as the identity on the fiber over $t=\infty$.
If the fiber over $t=\infty$ is reducible, then $\sigma$ fixes at most rational curves, in particular $k=\alpha$.
Observe that $\sigma'$ preserves the section at infinity $t\mapsto (0:1:0;t)$ and has two fixed points on it, one on the fiber over $t=0$, the other over $t=\infty$. Using this condition and the equality $n=2\alpha+4$ given by Proposition \ref{rel4}, one sees that $n=6$, $k=1$ if $\pi_C$ has a fiber of type ${\rm I}_4$, $(k,n,a)=(2,8,0)$  for a fiber
of type  ${\rm I}_8$, $(3,10,0)$ for a fiber of type ${\rm I}_{12}$ and $(k,n,a)=(4,12,0)$ for a fiber of type  ${\rm I}_{16}$.
In case $\pi_C$ has a fiber of type ${\rm IV^*}$,  $\sigma'$ acts as a reflection fixing one curve, giving $(k,n,a)=(1,6,1)$.

The cases with the fibers of type ${\rm I_4, I_8, I_{12}}$ and ${\rm IV^*}$ give examples with $l\not=0$ that appear in Table \ref{others}.
 In case there is a fiber of type ${\rm I_{16}}$ 
 we have instead $l=0$ (see the last line of Table \ref{l=0}).
\end{example}

\bibliographystyle{amsplain}
\bibliography{Biblio}

\end{document}